\begin{document}

\newtheorem{theorem}{Theorem}
\newtheorem{lemma}{Lemma}
\newtheorem{proposition}{Proposition}
\newtheorem{rmk}{Remark}
\newtheorem{example}{Example}
\newtheorem{exercise}{Exercise}
\newtheorem{definition}{Definition}
\newtheorem{corollary}{Corollary}
\newtheorem{notation}{Notation}
\newtheorem{claim}{Claim}
\newtheorem{Conjecture}{Conjecture}

\newtheorem{dif}{Definition}

 \newtheorem{thm}{Theorem}[section]
 \newtheorem{cor}[thm]{Corollary}
 \newtheorem{lem}[thm]{Lemma}
 \newtheorem{prop}[thm]{Proposition}
 \theoremstyle{definition}
 \newtheorem{defn}[thm]{Definition}
 \theoremstyle{remark}
 \newtheorem{rem}[thm]{Remark}
 \newtheorem*{ex}{Example}
 \numberwithin{equation}{section}

\newcommand{\vertiii}[1]{{\left\vert\kern-0.25ex\left\vert\kern-0.25ex\left\vert #1
    \right\vert\kern-0.25ex\right\vert\kern-0.25ex\right\vert}}

\newcommand{\R}{{\mathbb R}}
\newcommand{\C}{{\mathbb C}}
\newcommand{\U}{{\mathcal U}}
\newcommand{\norm}[1]{\left\|#1\right\|}
\renewcommand{\(}{\left(}
\renewcommand{\)}{\right)}
\renewcommand{\[}{\left[}
\renewcommand{\]}{\right]}
\newcommand{\f}[2]{\frac{#1}{#2}}
\newcommand{\im}{i}
\newcommand{\cl}{{\mathcal L}}
\newcommand{\ck}{{\mathcal K}}

\newcommand{\al}{\alpha}
\newcommand{\be}{\beta}
\newcommand{\wh}[1]{\widehat{#1}}
\newcommand{\ga}{\gamma}
\newcommand{\Ga}{\Gamma}
\newcommand{\de}{\delta}
\newcommand{\ben}{\beta_n}
\newcommand{\De}{\Delta}
\newcommand{\ve}{\varepsilon}
\newcommand{\ze}{\zeta}
\newcommand{\Th}{\Theta}
\newcommand{\ka}{\kappa}
\newcommand{\la}{\lambda}
\newcommand{\laj}{\lambda_j}
\newcommand{\lak}{\lambda_k}
\newcommand{\La}{\Lambda}
\newcommand{\si}{\sigma}
\newcommand{\Si}{\Sigma}
\newcommand{\vp}{\varphi}
\newcommand{\om}{\omega}
\newcommand{\Om}{\Omega}
\newcommand{\ra}{\rightarrow}

\newcommand{\ro}{{\mathbf R}}
\newcommand{\rn}{{\mathbf R}^n}
\newcommand{\rd}{{\mathbf R}^d}
\newcommand{\rmm}{{\mathbf R}^m}
\newcommand{\rone}{\mathbb R}
\newcommand{\rtwo}{\mathbb R^2}
\newcommand{\rthree}{\mathbf R^3}
\newcommand{\rfour}{\mathbf R^4}
\newcommand{\ronen}{{\mathbf R}^{n+1}}
\newcommand{\ku}{\mathbf u}
\newcommand{\kw}{\mathbf w}
\newcommand{\kf}{\mathbf f}
\newcommand{\kz}{\mathbf z}
\newcommand{\bb}{\mathbf B}

\newcommand{\N}{\mathbf N}

\newcommand{\tn}{\mathbf T^n}
\newcommand{\tone}{\mathbf T^1}
\newcommand{\ttwo}{\mathbf T^2}
\newcommand{\tthree}{\mathbf T^3}
\newcommand{\tfour}{\mathbf T^4}

\newcommand{\zn}{\mathbf Z^n}
\newcommand{\zp}{\mathbf Z^+}
\newcommand{\zone}{\mathbf Z^1}
\newcommand{\zz}{\mathbf Z}
\newcommand{\ztwo}{\mathbf Z^2}
\newcommand{\zthree}{\mathbf Z^3}
\newcommand{\zfour}{\mathbf Z^4}

\newcommand{\hn}{\mathbf H^n}
\newcommand{\hone}{\mathbf H^1}
\newcommand{\htwo}{\mathbf H^2}
\newcommand{\hthree}{\mathbf H^3}
\newcommand{\hfour}{\mathbf H^4}

\newcommand{\cone}{\mathbf C^1}
\newcommand{\ctwo}{\mathbf C^2}
\newcommand{\cthree}{\mathbf C^3}
\newcommand{\cfour}{\mathbf C^4}
\newcommand{\dpr}[2]{\langle #1,#2 \rangle}

\newcommand{\sn}{\mathbf S^{n-1}}
\newcommand{\sone}{\mathbf S^1}
\newcommand{\stwo}{\mathbf S^2}
\newcommand{\sthree}{\mathbf S^3}
\newcommand{\sfour}{\mathbf S^4}

\newcommand{\lp}{L^{p}}
\newcommand{\lppr}{L^{p'}}
\newcommand{\lqq}{L^{q}}
\newcommand{\lr}{L^{r}}
\newcommand{\echi}{(1-\chi(x/M))}
\newcommand{\chip}{\chi'(x/M)}

\newcommand{\wlp}{L^{p,\infty}}
\newcommand{\wlq}{L^{q,\infty}}
\newcommand{\wlr}{L^{r,\infty}}
\newcommand{\wlo}{L^{1,\infty}}

\newcommand{\lprn}{L^{p}(\rn)}
\newcommand{\lptn}{L^{p}(\tn)}
\newcommand{\lpzn}{L^{p}(\zn)}
\newcommand{\lpcn}{L^{p}(\cn)}
\newcommand{\lphn}{L^{p}(\cn)}

\newcommand{\lprone}{L^{p}(\rone)}
\newcommand{\lptone}{L^{p}(\tone)}
\newcommand{\lpzone}{L^{p}(\zone)}
\newcommand{\lpcone}{L^{p}(\cone)}
\newcommand{\lphone}{L^{p}(\hone)}

\newcommand{\lqrn}{L^{q}(\rn)}
\newcommand{\lqtn}{L^{q}(\tn)}
\newcommand{\lqzn}{L^{q}(\zn)}
\newcommand{\lqcn}{L^{q}(\cn)}
\newcommand{\lqhn}{L^{q}(\hn)}

\newcommand{\lo}{L^{1}}
\newcommand{\lt}{L^{2}}
\newcommand{\li}{L^{\infty}}
\newcommand{\beqn}{\begin{eqnarray*}}
\newcommand{\eeqn}{\end{eqnarray*}}
\newcommand{\pplus}{P_{Ker[\cl_+]^\perp}}

\newcommand{\co}{C^{1}}
\newcommand{\coi}{C_0^{\infty}}

\newcommand{\ca}{\mathcal A}
\newcommand{\cs}{\mathcal S}
\newcommand{\cm}{\mathcal M}
\newcommand{\cf}{\mathcal F}
\newcommand{\cb}{\mathcal B}
\newcommand{\ce}{\mathcal E}
\newcommand{\cd}{\mathcal D}
\newcommand{\cg}{\mathcal G}
\newcommand{\cn}{\mathcal N}
\newcommand{\cz}{\mathcal Z}
\newcommand{\crr}{\mathbf R}
\newcommand{\cc}{\mathcal C}
\newcommand{\ch}{\mathcal H}
\newcommand{\cq}{\mathcal Q}
\newcommand{\cp}{\mathcal P}
\newcommand{\cx}{\mathcal X}
\newcommand{\eps}{\epsilon}

\newcommand{\pv}{\textup{p.v.}\,}
\newcommand{\loc}{\textup{loc}}
\newcommand{\intl}{\int\limits}
\newcommand{\iintl}{\iint\limits}
\newcommand{\dint}{\displaystyle\int}
\newcommand{\diint}{\displaystyle\iint}
\newcommand{\dintl}{\displaystyle\intl}
\newcommand{\diintl}{\displaystyle\iintl}
\newcommand{\liml}{\lim\limits}
\newcommand{\suml}{\sum\limits}
\newcommand{\ltwo}{L^{2}}
\newcommand{\supl}{\sup\limits}
\newcommand{\df}{\displaystyle\frac}
\newcommand{\p}{\partial}
\newcommand{\Ar}{\textup{Arg}}
\newcommand{\abssigk}{\widehat{|\si_k|}}
\newcommand{\ed}{(1-\p_x^2)^{-1}}
\newcommand{\tT}{\tilde{T}}
\newcommand{\tV}{\tilde{V}}
\newcommand{\wt}{\widetilde}
\newcommand{\Qvi}{Q_{\nu,i}}
\newcommand{\sjv}{a_{j,\nu}}
\newcommand{\sj}{a_j}
\newcommand{\pvs}{P_\nu^s}
\newcommand{\pva}{P_1^s}
\newcommand{\cjk}{c_{j,k}^{m,s}}
\newcommand{\Bjsnu}{B_{j-s,\nu}}
\newcommand{\Bjs}{B_{j-s}}
\newcommand{\Ly}{L_i^y}
\newcommand{\dd}[1]{\f{\partial}{\partial #1}}
\newcommand{\czz}{Calder\'on-Zygmund}
\newcommand{\chh}{\mathcal H}

\newcommand{\lbl}{\label}
\newcommand{\beq}{\begin{equation}}
\newcommand{\eeq}{\end{equation}}
\newcommand{\beqna}{\begin{eqnarray*}}
\newcommand{\eeqna}{\end{eqnarray*}}
\newcommand{\bp}{\begin{proof}}
\newcommand{\ep}{\end{proof}}
\newcommand{\bprop}{\begin{proposition}}
\newcommand{\eprop}{\end{proposition}}
\newcommand{\bt}{\begin{theorem}}
\newcommand{\et}{\end{theorem}}
\newcommand{\bex}{\begin{Example}}
\newcommand{\eex}{\end{Example}}
\newcommand{\bc}{\begin{corollary}}
\newcommand{\ec}{\end{corollary}}
\newcommand{\bcl}{\begin{claim}}
\newcommand{\ecl}{\end{claim}}
\newcommand{\bl}{\begin{lemma}}
\newcommand{\el}{\end{lemma}}
\newcommand{\dea}{(-\De)^\be}
\newcommand{\naa}{|\nabla|^\be}
\newcommand{\cj}{{\mathcal J}}
\newcommand{\ci}{{\mathcal I}}
\newcommand{\ubb}{{\mathbf u}}

\title[A note on the instability of the Ruf-Sani solitons]{On the instability of the Ruf-Sani solitons for the NLS with exponential nonlinearity}

\author[Hichem Hajaiej]{\sc Hichem Hajaiej}
\address{
Department of Mathematics, College of  Natural Science
Cal State University, 5151 State Drive, 90032 Los Angeles, California, USA,
}
\email{hhajaie@calstatela.edu}

\author[Atanas G. Stefanov]{\sc Atanas G. Stefanov}
\address{Department of Mathematics, University of Alabama - Birmingham, 
	1402 10th Avenue South
	Birmingham AL 35294, USA
 }
\email{stefanov@uab.edu}

 \thanks{
 Atanas Stefanov acknowledges  partial support  from NSF-DMS,   \# 1908626.}

\subjclass[2010]{Primary: 35 Q 55, 35 B 44, 37 K 40, 37 K 45}

\keywords{NLS with exponential nonlinearity, solitons, instability}

\date{\today}

\begin{abstract}
We study the two dimensional non-linear Schr\"odinger equation with two types of exponential non-linearities. It is well-known by a work of Ruf - Sani, \cite{RS}, that such models support solitary wave solutions, which are solutions of some constrained minimization problem. We show that these Ruf - Sani solitons, are spectrally unstable as well as unstable by blow-up.
\end{abstract}

\maketitle
\section{ Introduction}
 We consider the Schr\"odinger equation with focusing exponential nonlinearity
 \begin{equation}
 \label{10}
 i u_t +\De u+ f_\mu(u) =0; \ \ (t,x)\in \rone\times \rtwo
 \end{equation}
 with
 $$
 f_\mu(u)=(e^{4\pi |u|^2} - 1 - 4\pi \mu |u|^2) u, \ \  \mu\in \{0,1\}.
 $$
  Note that this model enjoys the conserved quantities
  \begin{eqnarray*}
  M(u) &=& \int_{\rtwo} |u|^2 dx \\
  E_\mu(u) &=& \f{1}{2} \int_{\rtwo} |\nabla u|^2 dx- \int_{\rtwo} F_\mu(u) dx,
  \end{eqnarray*}
 where $F_\mu'=f_\mu, F_\mu(0)=0$. Explicitly,
 $$
 F_\mu(u) =\f{1}{8\pi} \left(e^{4\pi |u|^2} - 1 - 4\pi |u|^2-8\pi^2 \mu |u|^4\right), \ \  \mu\in \{0,1\}.
 $$
 Our work concentrates on the solitary waves of \eqref{10}, namely the solutions in the form $u=e^{i \om t} \phi, \om>0$, which clearly satisfy the profile problem
 \begin{equation}
 \label{20}
 -\De \phi+\om \phi = f_\mu(\phi).
 \end{equation}
 \subsection{Ground states}
 There are various definitions of ground states, which may be adopted for such objects. The notion of ground state has to do with an underlying mode of  variational construction. In our case, we shall exclusively consider the Ruf-Sani construction, \cite{RS}, which solves a particular  constrained variational problem. Here is the precise result, due to Ruf-Sani, \cite{RS}.
 \begin{proposition}(\cite{RS}, see also \cite{DKM})
 	\label{prop:10}
 	Let $f:\rone_+\to \rone_+$ be a continuous function, which  satisfies
 	
 		$$
 		\lim_{|t|\to \infty} \f{f(t)}{e^{\al t^2}}= \left\{
 		\begin{array}{cc}
 	0 & \al>4\pi \\
 	+\infty & \al<4\pi
 		\end{array}
 		\right.
 		$$
 		
 		$$
 		\lim_{t\to 0} \f{f(t)}{t}=0, \ \  \limsup_{|t|\to +\infty} \frac{t f(t)}{e^{4\pi t^2}}>0.
 		$$

 		 For $F: F'=f, F(0)=0$, and all $s\neq 0$,
 		$$
 		0<2 F(s)\leq s f(s).
 		$$
 	
 	Then, the minimization problem
 	\begin{equation}
 	\label{16}
 	\left\{
 	\begin{array}{l}
 	\|\nabla u\|\to \min \\
 	\textup{subject to}\ \ \frac{1}{2} \|u\|^2 - \int_{\rtwo}  F(u(x)) dx = 0.
 	\end{array}
 	\right.
 	\end{equation}
 	has a solution $Q$. Moreover, $Q$ satisfies the following properties:
 	\begin{itemize}
 		\item $Q$ solves the Euler-Lagrange equation
 		\begin{equation}
 		\label{30}
 		- \De Q+ Q = f(Q)
 		\end{equation}
 		\item $Q$ is radially symmetric, $Q\in C^2\cap L^\infty$, $Q$ is exponentially decaying at $\pm \infty$.
 		\item $0<\|\nabla Q\|<1$ and
 		\begin{eqnarray}
 		\label{38}
 	& & \frac{1}{2} \|Q\|^2 = 	\int_{\rtwo}  F(Q), \\
 		\label{40}
 	& & 	\|\nabla Q\|^2 + \|Q\|^2 = \int_{\rtwo}  f(Q) Q.
 		\end{eqnarray}
 	\end{itemize}
 \end{proposition}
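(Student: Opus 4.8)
The plan is to realize $Q$ as a minimizer of the constrained problem \eqref{16} and then recover the Euler--Lagrange equation \eqref{30} by exploiting the two-dimensional scaling invariance of the two functionals. Set $\mathcal M=\{u\in H^1(\rtwo):u\neq 0,\ \tfrac12\|u\|^2=\int_{\rtwo}F(u)\}$ and $m=\inf_{u\in\mathcal M}\|\nabla u\|$. First I would check that $\mathcal M\neq\emptyset$: for a fixed nonzero $v$ the map $t\mapsto\tfrac12 t^2\|v\|^2-\int F(tv)$ is positive near $t=0$ (since $f(s)/s\to0$ at the origin forces $F(s)=o(s^2)$) and negative for large $t$ (by the critical exponential growth), so it vanishes somewhere. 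The decisive structural feature is the scale invariance: under $u\mapsto u(\cdot/\lambda)$ both sides of the constraint are multiplied by $\lambda^2$ while $\|\nabla u\|$ is unchanged in dimension two; hence a minimizing sequence may be dilated so that $\|u_n\|=1$, after which $\{u_n\}$ is bounded in $H^1(\rtwo)$. Replacing each $u_n$ by its symmetric decreasing rearrangement preserves the constraint (which sees only $|u_n|$) and does not increase $\|\nabla u_n\|$ by P\'olya--Szeg\H{o}, so I may take $u_n$ radial and nonincreasing. A Trudinger--Moser splitting $F(s)\le\varepsilon s^2+C_\varepsilon|s|^p(e^{4\pi s^2}-1)$ shows $m>0$, for otherwise $\|\nabla u_n\|\to0$ would force $\int F(u_n)\to0$, contradicting $\int F(u_n)=\tfrac12$.

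The heart of the matter --- and the step I expect to be the main obstacle --- is the \emph{strict} inequality $m<1$, which is exactly what keeps the minimization strictly below the critical Trudinger--Moser level $4\pi$ (normalized here to $\|\nabla u\|^2=1$) and thereby restores compactness. I would establish it by testing with Moser concentrating functions $w_n$ supported in the unit disc, normalized so that $\|\nabla w_n\|^2=1$ and $w_n\equiv(2\pi)^{-1/2}\sqrt{\log n}$ on $\{|x|\le 1/n\}$, for which $\|w_n\|^2=O(1/\log n)$. On the core $\{|x|\le1/n\}$ one has $e^{4\pi w_n^2}=n^2$, and the lower growth hypothesis $\limsup_{t\to\infty}tf(t)e^{-4\pi t^2}>0$ is precisely what makes $\int F(w_n)$ exceed $\tfrac12\|w_n\|^2$ for large $n$. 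Consequently the constraint function $s\mapsto\int F(sw_n)-\tfrac12 s^2\|w_n\|^2$, negative near $s=0$ and positive at $s=1$, has a zero $s_n\in(0,1)$, so $s_n w_n\in\mathcal M$ with $\|\nabla(s_n w_n)\|^2=s_n^2<1$ and hence $m\le s_n<1$. Making this estimate sharp enough to beat the threshold (rather than merely reach it) is the delicate point, and it is where hypothesis three is indispensable; for the paper's own nonlinearities, where $F(s)\sim\tfrac1{8\pi}e^{4\pi s^2}$, the core contribution is in fact of order one and the inequality is comfortable.

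With $0<m<1$ in hand, let $u_n\rightharpoonup Q$ weakly in $H^1_{\mathrm{rad}}(\rtwo)$; by Strauss's lemma $u_n\to Q$ in $L^q(\rtwo)$ for every $q\in(2,\infty)$ and a.e. Since $\|\nabla u_n\|^2\to m^2<1$, a subcritical Trudinger--Moser bound makes $\{e^{4\pi u_n^2}-1\}$ bounded in some $L^r$, $r>1$, hence $\{F(u_n)\}$ uniformly integrable, and Vitali's theorem gives $\int F(u_n)\to\int F(Q)$. This rules out $Q=0$ (otherwise $\tfrac12=\int F(u_n)\to0$) and yields $\tfrac12\|Q\|^2\le\int F(Q)$ by weak lower semicontinuity. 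If this were strict, the zero $s^*\in(0,1)$ of $s\mapsto\int F(sQ)-\tfrac12 s^2\|Q\|^2$ would give $s^*Q\in\mathcal M$ with $\|\nabla(s^*Q)\|<\|\nabla Q\|\le m$, contradicting the definition of $m$; hence $Q\in\mathcal M$ and $\|\nabla Q\|=m$, so $Q$ is a minimizer. Lagrange multipliers then give $-2\Delta Q=\mu\,(Q-f(Q))$; pairing with $Q$ and using $\int(f(Q)Q-2F(Q))>0$ (from the constraint together with the \emph{strict} inequality in $2F(s)\le sf(s)$) forces $\mu<0$. Writing $\mu=-\nu$ with $\nu>0$, the equation becomes $-\tfrac{2}{\nu}\Delta Q+Q=f(Q)$, and the two-dimensional dilation $Q\mapsto Q(\sqrt{2/\nu}\,\cdot)$ --- which preserves both $\mathcal M$ and $\|\nabla Q\|=m$ --- normalizes it to \eqref{30}.

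It remains to collect the stated properties. Radial symmetry and monotonicity are inherited from the rearranged sequence; $Q\not\equiv0$ with $Q\in H^1(\rtwo)$ gives $0<\|\nabla Q\|=m<1$, the constraint is exactly \eqref{38}, and pairing \eqref{30} with $Q$ yields \eqref{40}. For regularity, Trudinger--Moser places $f(Q)\in L^p(\rtwo)$ for all $p<\infty$, so elliptic estimates give $Q\in W^{2,p}\hookrightarrow C^{1,\alpha}$, and one further bootstrap (using continuity of $f$) yields $Q\in C^2\cap L^\infty$. Finally, since $Q(x)\to0$ as $|x|\to\infty$ and $f(Q)/Q\to0$, for large $|x|$ the equation reads $-\Delta Q+(1-o(1))Q=0$, and comparison with $e^{-(1-\varepsilon)|x|}$ via the maximum principle produces the asserted exponential decay.
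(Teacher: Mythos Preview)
The paper does not actually prove Proposition~\ref{prop:10}; it is quoted as a known result of Ruf--Sani \cite{RS} (see also \cite{DKM}), and the only argument offered is the post-statement remark that \eqref{38} is the Pohozaev identity obtained by pairing \eqref{30} with $x\cdot\nabla Q$, while \eqref{40} follows from pairing \eqref{30} with $Q$. Your proposal therefore goes far beyond the paper: you reconstruct the full Ruf--Sani variational scheme --- non-emptiness of the constraint manifold, the two-dimensional scale invariance, symmetric decreasing rearrangement, the decisive strict sub-criticality $m<1$ via Moser concentrating sequences (correctly flagged as the delicate step, driven precisely by the lower-growth hypothesis $\limsup tf(t)e^{-4\pi t^2}>0$), compactness recovery through Trudinger--Moser at a subcritical level, and the dilation trick to normalize the Lagrange multiplier. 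This outline is correct and is essentially the strategy of \cite{RS}; it simply supplies what the paper takes as a black box. One small comment: your sign argument for the multiplier does not really require the \emph{strict} pointwise inequality $2F(s)<sf(s)$. Since $\|\nabla Q\|^2=m^2>0$ while $\|Q\|^2-\int f(Q)Q=2\int F(Q)-\int f(Q)Q\le 0$, the identity $2\|\nabla Q\|^2=\mu\bigl(\|Q\|^2-\int f(Q)Q\bigr)$ already forces the bracket to be strictly negative and hence $\mu<0$.
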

 {\bf Remark:} We would like to note that if one starts with a nice solution of the elliptic problem \eqref{30}, then the relation \eqref{38} is nothing but the Pohozaev identity for such solutions and can be easily obtained by integration by parts, by taking dot product of \eqref{30} with $x\cdot \nabla Q$. Similarly, \eqref{40} is obtained after taking dot product of \eqref{30} with $Q$. In

As a simple consequence of this result, we will obtain suitable solutions of \eqref{20}. Indeed, for a fixed $\om>0$, set
$$
f(z):=\f{1}{\om} (e^{4\pi z^2}-1-4\pi \mu z^2)z, \mu\in \{0,1\},
$$
 with the corresponding function
 $$
 F(z)=\f{1}{8\pi \om} \left(e^{4\pi |u|^2} - 1 - 4\pi |u|^2-8\pi^2 \mu |u|^4\right).
 $$
 We claim that the pair $f, F, \mu\in \{0,1\}$ satisfies the conditions in Proposition \ref{prop:10}. For the case, $\mu=0$, the only non trivial part of this statement is the inequality $2 F(z)\leq z f(z)$, which can be seen by the expansion in McLaurin series
 $$
 2 F(z)=\f{1}{4\pi \om} \sum_{l=2}^\infty \f{(4\pi z^2)^l}{ l!} <
\f{1}{4\pi \om}  \sum_{l=2}^\infty \f{(4\pi z^2)^l}{(l-1)!} = z f(z)
 $$
 and similar for the case $\mu=1$.
We can thus infer the existence of a function $Q_\om$, as specified in Proposition \ref{prop:10}. Moreover, the assignment $\phi_\om(x):=Q_\om(\sqrt{\om} x)$ introduces a function, which is a solution of \eqref{20}, since $Q_\om$ solves the Euler-Lagrange equation \eqref{30}, corresponding to the specific nonlinearity $f_\om$.  We need a few definitions.

For a Lebesgue measurable function $f:\mathbb{R}^{n}\to \mathbb{R}_{+}$, we say that it is vanishing at infinity if
$\text{mes}\left\{ x\in \mathbb{R}^{n}:\ f(x)>a\right\} <\infty$ for any $a>0$; $d_{f}(a)=\text{mes}\left\{ x\in \mathbb{R}^{n}:\ f(x)>a\right\}$.\\
If $f$ vanishes at infinity, we denote $f^{*}$ the rearrangement of $f$. It is the unique function satisfying the following properties:
$$\begin{cases}
d_{f}(a)=d_{f^{*}}(a)\quad \text{for any}\quad a>0.\\
\text{There exists}\quad \rho:(0,\infty)\to \mathbb{R}_{+}\quad \text{a right continuous}\\
\text{ function such that}\quad f^{*}(x)=\rho(|x|).
\end{cases}$$

We say that $f$ is Schwarz symmetric (bell-shaped) if $f=f^{*}.$
Using rearrangement inequalities proven in \cite[Theorem 4.4]{Hajaiej-Stuart}.
We know that for any $u\in H^{1}_{+}(\mathbb{R}^{2})=\left\{ H^{1}(\mathbb{R}^{2}):\ u\geq 0\right\} $:
\begin{equation}\label{R1}
\int_{\mathbb{R}^{2}}(e^{4\pi|u|^{2}}-1)dx=\int_{\mathbb{R}^{2}}(e^{4\pi|u^{*}|^{2}}-1)dx
\end{equation}
\begin{equation}\label{R2}
\int_{\mathbb{R}^{2}}| u|^{4} dx=\int_{\mathbb{R}^{2}}| u^{*}|^{4} dx.
\end{equation}
On the other hand, thanks to the Polya-Szeg\"o inequality
\begin{equation}\label{R3}
\|\nabla u^{*}\|^{2}_{2}\leq \|\nabla u\|^{2}_{2}.
\end{equation}
If $f$ is not a non-negative function \eqref{R1} and \eqref{R2} remain true,\eqref{R3} becomes
\begin{equation}\label{R31}
\|\nabla f^{*}|\|_{2}\leq   \|\nabla  f\|_{2}.
\end{equation}
$(\ref{R1})$, $(\ref{R2})$ and $(\ref{R31})$ imply that the minimization problem (Ruf-Sani)
$$\inf\left\{ \|\nabla  u\|_{2}^{2}:\ \frac{1}{2}\| u\|_{2}^{2}-\int_{\mathbb{R}^{2}}F_{\mu}(u(x))dx=0\right\} $$
always have a Schwarz-symmetric (bell-shaped) minimizer.
We are now ready to collect our findings about $\phi_\om$  in the following corollary.
\begin{corollary}
	\label{cor:20}
	For each $\om>0$ and $\mu\in \{0,1\}$, there exists a solution $\phi_\om$ of the elliptic problem \eqref{20}. Moreover,
	\begin{itemize}
		\item $\phi_\om \in C^2\cap L^\infty$ is a bell-shaped function,
		\item $0<\|\nabla \phi_\om\|_{L^2}<1$ and
		\begin{eqnarray}
		 	\label{42}
		 	& & \frac{\om}{2} \|\phi_\om\|^2 = 	\int_{\rtwo}  F(\phi_\om), \\
		 	\label{45}
		 	& & 	\|\nabla \phi_\om\|^2 + \om \|\phi_\om\|^2 = \int_{\rtwo}  f(\phi_\om) \phi_\om.
		\end{eqnarray}
	\end{itemize}
\end{corollary}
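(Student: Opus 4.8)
The plan is to deduce this corollary directly from Proposition \ref{prop:10} applied to the rescaled nonlinearity, via the substitution already indicated above. First I would record that the pair $f(z)=\om^{-1}(e^{4\pi z^2}-1-4\pi\mu z^2)z$ and its primitive $F$ verify all the hypotheses of Proposition \ref{prop:10}: the limit relations $\lim_{t\to 0}f(t)/t=0$ and $\lim_{|t|\to\infty}f(t)/e^{\al t^2}$ being $0$ for $\al>4\pi$ and $+\infty$ for $\al<4\pi$ follow at once from the Taylor expansion of the exponential (for $\mu=0$ the leading small-$t$ behavior of $f(t)/t$ is $\sim z^2$, for $\mu=1$ it is $\sim z^4$); the condition $\limsup_{|t|\to\infty} tf(t)/e^{4\pi t^2}=+\infty>0$ is immediate; and the decisive inequality $0<2F(z)\le zf(z)$ for $z\ne 0$ is precisely the McLaurin computation displayed just before the corollary, together with its stated analogue for $\mu=1$. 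Proposition \ref{prop:10} then furnishes a minimizer $Q_\om$ of the constrained problem solving $-\De Q_\om+Q_\om=f(Q_\om)$, i.e. $\om(-\De Q_\om+Q_\om)=f_\mu(Q_\om)$, which is radial, lies in $C^2\cap L^\infty$, decays exponentially, and satisfies $0<\|\nabla Q_\om\|<1$ together with the identities \eqref{38} and \eqref{40}.

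The next step is to undo the scaling. Setting $\phi_\om(x):=Q_\om(\sqrt{\om}\,x)$ and using $\De\phi_\om(x)=\om(\De Q_\om)(\sqrt{\om}\,x)$, a one-line computation gives $-\De\phi_\om(x)+\om\phi_\om(x)=\om\,(-\De Q_\om+Q_\om)(\sqrt{\om}\,x)=f_\mu(\phi_\om(x))$, so $\phi_\om$ solves \eqref{20}. Radial monotonicity, the $C^2\cap L^\infty$ regularity, and the exponential decay are all preserved under dilation by the positive factor $\sqrt{\om}$, so $\phi_\om$ is again bell-shaped.

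The key structural point is that in two space dimensions the Dirichlet energy is scale invariant under this dilation: the change of variables $y=\sqrt{\om}\,x$ (Jacobian $\om$) yields $\|\nabla\phi_\om\|^2=\|\nabla Q_\om\|^2$, whence $0<\|\nabla\phi_\om\|<1$ transfers verbatim from Proposition \ref{prop:10}. The same change of variables gives $\|\phi_\om\|^2=\om^{-1}\|Q_\om\|^2$, $\int_{\rtwo} F_\mu(\phi_\om)=\om^{-1}\int_{\rtwo} F_\mu(Q_\om)$, and $\int_{\rtwo} f_\mu(\phi_\om)\phi_\om=\om^{-1}\int_{\rtwo} f_\mu(Q_\om)Q_\om$. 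Multiplying \eqref{38} through by $\om$ and substituting these relations produces \eqref{42}, while \eqref{40} together with the identity $\|\nabla\phi_\om\|^2+\om\|\phi_\om\|^2=\|\nabla Q_\om\|^2+\|Q_\om\|^2$ produces \eqref{45}.

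Since every ingredient is either quoted from Proposition \ref{prop:10} or is a routine change of variables, there is no serious obstacle; the only point demanding care is the bookkeeping of the $\om$-dependent factors and, in particular, not conflating the rescaled nonlinearity $\om^{-1}f_\mu$ and its primitive $\om^{-1}F_\mu$, which enter the constraint and Proposition \ref{prop:10}, with the physical $f_\mu,F_\mu$ that appear in the stated identities \eqref{42}--\eqref{45}.
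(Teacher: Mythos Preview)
Your argument is essentially the paper's own: apply Proposition~\ref{prop:10} to the rescaled nonlinearity $\om^{-1}f_\mu$, obtain $Q_\om$, set $\phi_\om(x)=Q_\om(\sqrt{\om}\,x)$, and transfer all properties via the change of variables, exploiting the 2D scale invariance of the Dirichlet energy to get $\|\nabla\phi_\om\|=\|\nabla Q_\om\|<1$ and the identities \eqref{42}--\eqref{45}.

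There is one small gap. You write that ``radial monotonicity \dots\ is preserved under dilation'', but Proposition~\ref{prop:10} as stated only yields that $Q$ is \emph{radially symmetric}, not radially nonincreasing. The paper supplies the missing step separately, just before the corollary: by the Polya--Szeg\H{o} inequality together with the equimeasurability identities $\int(e^{4\pi|u|^2}-1)=\int(e^{4\pi|u^*|^2}-1)$ and $\int|u|^4=\int|u^*|^4$, the Ruf--Sani constrained problem always admits a Schwarz-symmetric (bell-shaped) minimizer. You should either reproduce or cite that rearrangement argument rather than attributing bell-shapedness to Proposition~\ref{prop:10}.
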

{\bf Remark:} We call the functions $\phi_\om$ the {\bf Ruf-Sani solitons} associated with the nonlinear \\ Schr\"odinger equation with exponential nonlinearity \eqref{10}.
 \subsection{Main results}
 The main objective of this paper is to study further properties of the Ruf-Sani solitons. It is for example easy to compute the precise asymptotics at $\pm \infty$. Namely, it is a standard to obtain
 \begin{equation}
 \label{58}
  \phi_\om(x)= c \f{e^{-\sqrt{\om} |x|}}{\sqrt{|x|}}+o\left(\f{e^{-\sqrt{\om} |x|}}{|x|}\right), |x|>>1, x\in \rtwo
 \end{equation}
see for example Theorem 2, \cite{St1}, which works for general super-linear nonlinearity.

Next, we shall be interested in the properties of the linearized operators $\cl_\pm$. For convenience, introduce functions $g, G$
\begin{eqnarray}
\label{g10}
	g(z) &=&e^{4\pi z}-1-4\pi \mu z \\
	\label{g11}
	G(z) &=& \f{e^{4\pi z}-1- 4\pi z - 8 \pi^2 \mu z^2}{4\pi},
\end{eqnarray}
so that $f(z)=g(z^2) z$, $G(z^2)=2F(z)$ and  $G(0)=0, G'(z)= g(z)$.
In these  variables,
\begin{eqnarray*}
\cl_- &=& - \De+\om - g(\phi^2_\om)=- \De+\om - (e^{4\pi \phi_\om^2}-1-4\pi \mu \phi_\om^2); \\
\cl_+ &=& - \De+\om - (2 \phi^2 g'(\phi^2)+ g(\phi^2) )= - \De+\om -   ( e^{4\pi \phi_\om^2}(8 \pi \phi_\om^2+1) - 1 - 12 \mu \pi \phi_\om^2).
\end{eqnarray*}
 as these are paramount in the stability analysis of the waves $\phi_\om$ as solutions to \eqref{10}, see \eqref{50} below.

 In line with the expectations in the classical cases of power nonlinearities, we have the usual properties of $\cl_\pm$. Recall that for a semi-bounded from below self-adjoint operator $S$ with a finite dimensional negative subspace $X_-$,
 the Morse index is defined as follows $n(S)=dim(X_-)=\#\{\si_{p}(S)\cap (-\infty, 0)\}$.
 \begin{proposition}
 	\label{theo:10}
 Let $\om>0, \mu\in \{0,1\}$ and $\phi_\om$ are the Ruf-Sani solitons constructed in Corollary \ref{cor:20}. Then, 	the Schr\"odinger operators $\cl_\pm$ have the following properties
 \begin{itemize}
 	\item $\cl_-\geq 0$, with a simple eigenvalue at zero, $Ker[\cl_-]=span[\phi_\om]$.
 	\item $\cl_+$ has Morse index $1$. That is, $n(\cl_+)=1$.
 \end{itemize}
 \end{proposition}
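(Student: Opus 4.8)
The first bullet is the standard ground-state picture for $\cl_-$. I would begin by observing that the profile equation \eqref{20}, written as $-\De\phi_\om+\om\phi_\om=g(\phi_\om^2)\phi_\om$, says precisely that $\cl_-\phi_\om=0$, so $\phi_\om\in Ker[\cl_-]$. Since $\phi_\om$ is bell-shaped and, by the asymptotics \eqref{58} together with the maximum principle, strictly positive, it is a nodeless element of the kernel. For the Schr\"odinger operator $\cl_-=-\De+\om-g(\phi_\om^2)$ the potential tends to $\om>0$ at infinity, so $\sigma_{\mathrm{ess}}(\cl_-)=[\om,\infty)$ and any eigenvalue below it is isolated. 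Invoking the Perron--Frobenius property of Schr\"odinger operators (a strictly positive eigenfunction can only belong to the bottom of the spectrum, which is then simple), I conclude that $0=\inf\sigma(\cl_-)$ is a simple eigenvalue with eigenfunction $\phi_\om$. This gives $\cl_-\geq 0$ and $Ker[\cl_-]=\mathrm{span}[\phi_\om]$.

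For $\cl_+$ the two inequalities $n(\cl_+)\ge1$ and $n(\cl_+)\le1$ are handled separately. From the definitions one has $\cl_+=\cl_--2\phi_\om^2 g'(\phi_\om^2)$, and since $g'(z)=4\pi(e^{4\pi z}-\mu)>0$ for $z>0$ in both cases $\mu\in\{0,1\}$, the subtracted potential is strictly positive wherever $\phi_\om\neq0$. Testing on the kernel of $\cl_-$ then yields $\dpr{\cl_+\phi_\om}{\phi_\om}=-2\int_{\rtwo}\phi_\om^4 g'(\phi_\om^2)\,dx<0$, so $\cl_+$ has at least one negative direction and $n(\cl_+)\ge1$.

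The heart of the matter is the upper bound $n(\cl_+)\le1$, which I would extract directly from the variational characterization of the soliton rather than from oscillation theory. Writing $I(u)=\|\nabla u\|^2$ and $N(u)=\tfrac12\|u\|^2-\int_{\rtwo}F(u)$ (after rescaling to the normalization $\om=1$, under which the Morse index is unchanged since it is invariant under the dilation conjugation), the minimizer $Q$ of $I$ on $\{N=0\}$ carries a Lagrange multiplier $\theta$ determined by $-2\De Q=\theta(Q-f(Q))=\theta\,\De Q$, i.e. $\theta=-2$. The second-order necessary condition for a constrained minimum asserts that the bordered Hessian $I''(Q)-\theta N''(Q)$ is positive semidefinite on the tangent space $\ker N'(Q)=\{v:\dpr{\De Q}{v}=0\}$, which has codimension one because $N'(Q)=Q-f(Q)=\De Q\not\equiv0$. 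A direct computation identifies this bordered Hessian with $2\dpr{\cl_+v}{v}$, so $\cl_+\ge0$ on a subspace of codimension one. By the min--max principle a self-adjoint operator that is nonnegative on a codimension-one subspace has at most one negative eigenvalue, hence $n(\cl_+)\le1$; combined with the previous paragraph this gives $n(\cl_+)=1$.

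The only genuinely delicate point is the last step: one must know that $N$ is $C^2$ near $Q$ so that the constrained second-variation inequality applies, and this is exactly where the Trudinger--Moser subcritical structure of the exponential nonlinearity enters, since the functional $\int_{\rtwo}(e^{4\pi u^2}-1)\,dx$ and its derivatives must be shown to be finite and smooth on $H^1(\rtwo)$ below the critical threshold. I would therefore concentrate the effort on justifying this differentiability and the validity of the second-order condition for the Ruf--Sani minimizer; the spectral bookkeeping via min--max is then immediate. As a fallback for the upper bound, the angular-momentum decomposition yields the same conclusion: since translation invariance gives $\cl_+\partial_j\phi_\om=0$ with $\partial_j\phi_\om=\phi_\om'(r)\,x_j/|x|$ and $\phi_\om'<0$ nodeless on $(0,\infty)$, the functions $\partial_j\phi_\om$ are the ground states of the $\ell=1$ sector at energy $0$, so every sector with $\ell\ge1$ is nonnegative, all negative eigenvalues are radial, and a Sturm oscillation count caps their number at one.
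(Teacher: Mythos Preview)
Your argument is correct and lands on the same inequality as the paper, namely $\dpr{\cl_+ h}{h}\ge 0$ for all $h\perp \De\phi_\om$, but you reach it by a different route. For $\cl_-$ the two proofs are essentially the same: the paper also argues that $\phi_\om>0$ with $\cl_-\phi_\om=0$ forces zero to be the simple bottom of the spectrum (the paper phrases it via the angular decomposition $\cl_-=\oplus_{l\ge 0}\cl_{-,l}$, but the content is identical). For $\cl_+$, however, the paper does \emph{not} apply the second-order Lagrange condition to the original Ruf--Sani problem \eqref{16}. Instead it first proves an auxiliary result (Lemma~\ref{le:10}): swapping cost and constraint, $\phi_\om$ is also a minimizer of $I[u]=\om\|u\|^2-\int G(u^2)$ on the sphere $\{\|\nabla u\|=\|\nabla\phi_\om\|\}$, with $I_{\min}=0$. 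The constraint is now quadratic, so the paper can parametrize it explicitly by $u_\eps=\|\nabla\phi_\om\|\,(\phi_\om+\eps h)/\|\nabla(\phi_\om+\eps h)\|$ and expand $I[u_\eps]$ by hand; for $h\perp\De\phi_\om$ one reads off $m''(0)=2\dpr{\cl_+ h}{h}\ge 0$.

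Your direct approach is shorter and conceptually cleaner: you stay with the original problem, compute the Lagrange multiplier $\theta=-2$, and identify the bordered Hessian $I''(Q)-\theta N''(Q)$ with $2\cl_+$ on the tangent space $\{\De Q\}^\perp$. The price, which you flag correctly, is that you must justify the $C^2$ smoothness of $u\mapsto\int_{\rtwo}F(u)$ on $H^1$ near $Q$ (subcritical Trudinger--Moser, using $\|\nabla Q\|<1$) to invoke the abstract second-order condition. The paper's detour through Lemma~\ref{le:10} trades that analytic verification for a short variational lemma and a concrete one-parameter expansion; either way the endpoint and the codimension-one subspace $\{\De\phi_\om\}^\perp$ are the same. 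Your fallback via the $\ell$-decomposition and Sturm oscillation is also valid, though neither you nor the paper actually need it.
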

 Our next result concerns the instability of the Ruf-Sani waves. In order to put the results in the proper context, let us consider the linearization of the Schr\"odinger equation with exponential nonlinearity in a vicinity of the soliton $e^{i \om t} \phi_\om$. More precisely, take
 $u=e^{ i \om t} (\phi_\om(x)+ v)= e^{ i \om t} (\phi_\om(x)+ v_1+i v_2)$ and plug this in \eqref{10}. After ignoring all terms $O(|v|^2)$, we obtain the linearized system
 \begin{equation}
 \label{50}
 \p_t \left(\begin{array}{c}
 v_1 \\ v_2
 \end{array}\right)= \left(\begin{array}{cc}
 0 & 1 \\ -1 & 0
 \end{array}\right) \left(\begin{array}{cc}
 \cl_+  & 0 \\ 0 & \cl_-
 \end{array}\right) \left(\begin{array}{c}
 v_1 \\ v_2
 \end{array}\right)=:\cj \cl \vec{v}.
 \end{equation}
 Passing to a time independent problem, $\vec{v} \to e^{\la t} \vec{v}$, allows us to reduce matters to the eigenvalue problem
 \begin{equation}
 \label{55}
 \cj \cl \vec{v} =\la \vec{v}.
 \end{equation}
 We now give the standard definitions of instability, as all our results refer to unstable waves.
 \begin{definition}
 	\label{defi:20}
 	
 We say that the wave $\phi_\om$ is spectrally unstable, if the eigenvalue problem \eqref{55} has a non-trivial solution $(\la, \vec{v}): \Re \la>0, \vec{v} \in D(\cl), \vec{v}\neq 0$.\\

 We say that the wave $\phi_\om$ is orbitally unstable, if there exists $\eps_0>0$ and a sequence $\vp_n: \lim_n \|\vp_n-\phi_\om\|_{H^1}=0$, while for the corresponding solutions of \eqref{10}, with initial Cauchy data $\vp_n$, we have
 $$
 \sup_{t>0} \inf_{y\in \rtwo, \theta\in\rone} \|\vp_n(t,\cdot) -
 e^{i \theta} \phi_\om(\cdot-y)\|_{L^2}\geq \eps_0.
 $$
 We say that $\phi_\om$ is unstable by blow up, if there exists  a sequence $\vp_n: \lim_n \|\vp_n-\phi_\om\|_{H^1}=0$, so that the corresponding solutions of \eqref{10}, with initial Cauchy data $\vp_n$, blow up in finite time.
 \end{definition}
 The next theorem is the main result of our work.
 \begin{theorem}
 	\label{theo:20}
 Let $\om>0, \mu\in \{0,1\}$. 	Then, Ruf-Sani  waves $e^{i \om t} \phi_\om$ are spectrally unstable, with a single real growing mode.  Moreover, these waves are unstable by blow-up.
 \end{theorem}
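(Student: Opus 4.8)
My plan is to derive \emph{both} conclusions from a single pointwise algebraic fact about the nonlinearity, which encodes the faster-than-any-power (``mass–supercritical'') growth of the exponential model, combined with the spectral data of Proposition \ref{theo:10}. Writing $s=\phi_\om^2$, set
\[
\psi(s)=4\big(g(s)s-G(s)\big)-2g'(s)s^2 .
\]
Expanding $g,G$ from \eqref{g10}--\eqref{g11} in McLaurin series, exactly as in the paper's verification of $2F\le sf$, one gets for both $\mu\in\{0,1\}$ that $\psi(s)=\sum_{l\ge 3}(4\pi)^{l-1}\frac{2(l-1)(2-l)}{l!}\,s^l<0$ for every $s>0$ (the $l=2$ term cancels and all later terms are negative), and likewise $2G(s)-sg(s)=\sum_{l\ge 3}(4\pi)^{l-1}\frac{2-l}{l!}\,s^l<0$ for $s>0$. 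These two strict signs are the analytic heart of everything below.

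\textbf{Spectral instability.} From \eqref{50}--\eqref{55} I would pass to the second–order formulation $-\cl_-\cl_+v_1=\la^2 v_1$; since $\cl_-\ge 0$ by Proposition \ref{theo:10}, real growing modes correspond exactly to negative eigenvalues of $\cl_-^{1/2}\cl_+\cl_-^{1/2}$, i.e.\ to negative directions of the form $\langle\cl_+v,v\rangle$ on $(\ker\cl_-)^\perp=\{\phi_\om\}^\perp$. To control these I use the $L^2$–preserving generator $w:=(1+x\cdot\nabla)\phi_\om$, which is radial. A one–line integration by parts gives $\langle w,\phi_\om\rangle=(1-\tfrac{n}{2})\|\phi_\om\|^2=0$ in dimension $n=2$, so $w$ is admissible for the constraint; commuting $1+x\cdot\nabla$ through \eqref{20} yields $\cl_+w=3f(\phi_\om)-f'(\phi_\om)\phi_\om-2\om\phi_\om$, and using the Pohozaev identities \eqref{42}--\eqref{45} reduces $\langle\cl_+w,w\rangle$ to $\int_{\rtwo}\psi(\phi_\om^2)\,dx<0$. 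Thus the (radial) negative direction of $\cl_+$ survives the constraint $\langle\,\cdot\,,\phi_\om\rangle=0$; since $n(\cl_+)=1$ this direction is unique, the classical constrained–index lemma forces $\langle\cl_+^{-1}\phi_\om,\phi_\om\rangle>0$, and hence $\cl_-^{1/2}\cl_+\cl_-^{1/2}$ has exactly one negative eigenvalue. This is precisely a single real growing mode $\la>0$. Differentiating \eqref{20} in $\om$ gives $\cl_+\partial_\om\phi_\om=-\phi_\om$, so the above is equivalent to the Vakhitov--Kolokolov relation $\frac{d}{d\om}\|\phi_\om\|^2=-2\langle\cl_+^{-1}\phi_\om,\phi_\om\rangle<0$.

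\textbf{Instability by blow-up.} Along the same flow $\phi_\om^\la(x)=\la\,\phi_\om(\la x)$ I set $h(\la)=E_\mu(\phi_\om^\la)$; the mass being constant, $h$ differs from the action $S_\om(u):=E_\mu(u)+\tfrac{\om}{2}\|u\|^2$ evaluated along $\phi_\om^\la$ by a constant, so $h'(1)=P(\phi_\om)=0$ and $h''(1)=\langle\cl_+w,w\rangle=\int_{\rtwo}\psi(\phi_\om^2)<0$, where $P(u)=\|\nabla u\|^2-\int_{\rtwo}\big[g(|u|^2)|u|^2-G(|u|^2)\big]$ is the virial functional with $\frac{d^2}{dt^2}\int_{\rtwo}|x|^2|u|^2\,dx=8P(u)$ (the coefficient $2G-sg<0$ reappears here). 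Hence $\la=1$ is a strict maximum, and for $\la_0>1$ near $1$ the datum $u_0=\phi_\om^{\la_0}$ is arbitrarily $H^1$–close to $\phi_\om$, has the same mass, and satisfies $P(u_0)<0$ and $S_\om(u_0)<S_\om(\phi_\om)=d_\om$, with $d_\om=\inf\{S_\om(u):u\neq0,\ P(u)=0\}$ attained at the Ruf--Sani ground state. The sign $\psi<0$ shows that for every fixed $v\neq0$ the map $\la\mapsto S_\om(v^\la)$ has a unique, hence global, maximum; this makes the set $\{u:\|u\|_2=\|\phi_\om\|_2,\ S_\om(u)<d_\om,\ P(u)<0\}$ invariant under \eqref{10} with $P$ bounded away from $0$, so conservation of mass and of $S_\om$ give $\frac{d^2}{dt^2}\int_{\rtwo}|x|^2|u(t)|^2\,dx=8P(u(t))\le-\delta<0$. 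Since $\phi_\om$ decays exponentially by \eqref{58}, $u_0$ has finite variance, and the standard concavity argument forces finite–time blow-up.

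\textbf{Main obstacle.} Everything collapses onto the master inequality $\psi(s)<0$: the negativity $\langle\cl_+w,w\rangle<0$, the strict maximality of $\la=1$, the sign of $\frac{d}{d\om}\|\phi_\om\|^2$, and the uniqueness of the maximum of $\la\mapsto S_\om(v^\la)$ are all consequences of it, and this is exactly where the exponential growth is used. The second delicate point is the quantitative variational gap keeping $P(u(t))$ \emph{uniformly} negative along the flow; I expect to extract it from that same uniqueness of the maximum together with the minimality of $\phi_\om$ in the Ruf--Sani problem, so that no ingredient beyond the pointwise sign of $\psi$ and Proposition \ref{theo:10} is required.
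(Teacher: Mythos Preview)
Your route is essentially the paper's: the same test vector $\Psi=w=(1+x\cdot\nabla)\phi_\om\perp\phi_\om$, the same reduction of $\langle\cl_+\Psi,\Psi\rangle$ via \eqref{42}--\eqref{45} to a pointwise sign (your series inequality $\psi(s)<0$ is precisely the McLaurin form of the paper's closed-form bound \eqref{187}), and for blow-up the same mass-preserving rescaling $\la\phi_\om(\la\cdot)$---the paper simply quotes \cite{DKM} (Lemmas~\ref{le:4.2}--\ref{le:4.3}) where you sketch the virial/invariant-set mechanism directly; your observation that the single inequality $\psi<0$ drives both the spectral and the blow-up parts (indeed, at any critical point $\la_0$ of $\la\mapsto S_\om(v^\la)$ one has $\la_0^{4}\,\partial_\la^2 S_\om(v^\la)|_{\la_0}=\int\psi((\la_0 v)^2)<0$) is a pleasant unification.

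One caveat: your detour through ``the classical constrained--index lemma forces $\langle\cl_+^{-1}\phi_\om,\phi_\om\rangle>0$'' and the Vakhitov--Kolokolov line $\cl_+\partial_\om\phi_\om=-\phi_\om$ tacitly assumes weak non-degeneracy $\phi_\om\perp\ker\cl_+$ and a $C^1$ branch $\om\mapsto\phi_\om$, neither of which is established here (the paper explicitly flags this). The paper sidesteps the issue via Corollary~\ref{cor:30} (Pelinovsky's criterion $\cl_+|_{\{\phi_\om\}^\perp}\not\ge 0$), for which your computation $\langle\cl_+ w,w\rangle<0$ with $w\perp\phi_\om$ is already sufficient---so simply stop there and drop the extra step.
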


 \section{Proof of Proposition  \ref{theo:10}}
 We start by establishing  an alternative  characterization of   $\phi_\om$,  which will enable us to derive the spectral properties of $\cl_+$.
  \subsection{An alternative variational characterization of $\phi_\om$}
 Specifically, taking into account  that $Q_\om$ is a constrained minimizer for \eqref{16}, it is easy to see,  by rescaling,  that $\phi_\om$ is a constrained minimizer of
 	\begin{equation}
 	\label{17}
 	\left\{
 	\begin{array}{l}
 	\|\nabla u\|_{L^2} \to \min \\
 	\textup{subject to}\ \ \om  \|u\|^2 - \f{1}{4\pi} \int_{\rtwo} \left(e^{4\pi |u|^2}- 1 - 4\pi |u|^2 - 8\pi^2 \mu |u|^4\right) dx = 0.
 	\end{array}
 	\right.
 	\end{equation}
 We will show that reversing the roles of the constraints and the cost function produces the same outcome.
 \begin{lemma}
 	\label{le:10}
 	The constrained minimization problem
 		\begin{equation}
 		\label{18}
 		\left\{
 		\begin{array}{l}
 		I[u]:=\om \|u\|^2 - \f{1}{4\pi} \int_{\rtwo} \left(e^{4\pi |u|^2}- 1 - 4\pi |u|^2 - 8\pi^2 \mu |u|^4\right) dx \to \min \\
 		\textup{subject to}\ \  \|\nabla u\|_{L^2}=\|\nabla \phi_\om\|_{L^2}
 		\end{array}
 		\right.
 		\end{equation}
 		has a solution and
 		$
 		I_{\min}:= \inf_{\|\nabla u\|=\|\nabla \phi_\om\|} I[u]= 0.
 		$
 		In particular,  $u=\phi_\om$ solves \eqref{18}.
 \end{lemma}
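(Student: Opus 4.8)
The plan is to observe first that $\phi_\om$ is itself admissible for \eqref{18}, since $\|\nabla\phi_\om\|_{L^2}=\|\nabla\phi_\om\|_{L^2}$ trivially, and that $I[\phi_\om]=0$ because $\phi_\om$, being the constrained minimizer of \eqref{17} constructed above, lies on the constraint surface of \eqref{17}, which is \emph{exactly} $\{I=0\}$. Hence $I_{\min}\le 0$, and the existence of a minimizer will be automatic once we establish the matching lower bound
\begin{equation*}
I[u]\ge 0\qquad\text{for every }u\in H^1(\rtwo)\text{ with }\|\nabla u\|_{L^2}=\|\nabla\phi_\om\|_{L^2}.
\end{equation*}
This single inequality is the whole content of the lemma: combined with $I[\phi_\om]=0$ it forces $I_{\min}=0$ and identifies $u=\phi_\om$ as a solution.

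To prove the lower bound I would argue by contradiction, exploiting that \eqref{16}--\eqref{17} and \eqref{18} interchange the roles of the cost and the constraint. Suppose there were $v\in H^1(\rtwo)$ with $\|\nabla v\|_{L^2}=\|\nabla\phi_\om\|_{L^2}=:c$ and $I[v]<0$ (if $I[v]=+\infty$ the bound is trivial, so only finite values matter). Consider the amplitude rescalings $v_s:=s\,v$, $s\in[0,1]$, and set $h(s):=I[v_s]$. Then $h(0)=I[0]=0$ and $h(1)=I[v]<0$, while for small $s>0$ the exponential term is negligible:
\begin{equation*}
\frac{1}{4\pi}\int_{\rtwo}\big(e^{4\pi s^2 v^2}-1-4\pi s^2v^2-8\pi^2\mu s^4v^4\big)\,dx=O(s^4)=o(s^2),
\end{equation*}
so that $h(s)=\om s^2\|v\|^2-o(s^2)>0$ for all sufficiently small $s>0$. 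By the intermediate value theorem there is $s_0\in(0,1)$ with $h(s_0)=I[s_0v]=0$.

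The function $w:=s_0 v$ is then admissible for the Ruf--Sani problem \eqref{17}, i.e.\ it satisfies $I[w]=0$, yet $\|\nabla w\|_{L^2}=s_0\,c<c=\|\nabla\phi_\om\|_{L^2}$. This contradicts the fact that $\phi_\om$ \emph{minimizes} the Dirichlet energy over the constraint set $\{I=0\}$. Hence no such $v$ exists, the lower bound holds, and the lemma follows.

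The main technical point, rather than the logic, is the justification of the small-$s$ expansion and the continuity of $h$. The key input is the Moser--Trudinger inequality in $\rtwo$: for $v\in H^1$ each moment $\int_{\rtwo}v^{2l}\,dx$ is finite and the McLaurin series $e^{4\pi s^2v^2}-1-4\pi s^2v^2=\sum_{l\ge2}(4\pi s^2v^2)^l/l!$ integrates term by term, giving the $O(s^4)$ bound above (an $O(s^6)$ bound when $\mu=1$, since the $l=2$ term $\tfrac12(4\pi s^2v^2)^2=8\pi^2s^4v^4$ is then cancelled) and, by dominated convergence using that the integral term in $I[v]$ is finite, the continuity of $h$ on $[0,1]$. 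I expect this integrability and continuity bookkeeping to be the only delicate part. Note also that the \emph{amplitude} scaling $s\mapsto s v$ is essential here precisely because in two dimensions the Dirichlet energy is invariant under dilations $v(\cdot/\la)$; dilations cannot change $\|\nabla\,\cdot\,\|_{L^2}$, so only amplitude scaling can move us onto the surface $\{I=0\}$ while strictly lowering the gradient norm.
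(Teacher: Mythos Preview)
Your argument is correct and follows essentially the same route as the paper: both proofs note $I[\phi_\om]=0$ so $I_{\min}\le 0$, then argue by contradiction via the amplitude scaling $s\mapsto s v$, the intermediate value theorem applied to $h(s)=I[sv]$, and the resulting violation of the minimality of $\phi_\om$ in \eqref{17}. Your additional remarks on Moser--Trudinger for the integrability/continuity of $h$ and on why dilations would not work are sound elaborations but do not change the strategy.
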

 \begin{proof}
 	The argument is pretty straightforward and exploits the fact that $\phi_\om$ is a solution of \eqref{17}. Indeed, as $u=\phi_\om$  satisfies the constraint of \eqref{18} and \eqref{17}, we have that
 	$$
 	I_{\min}:= \inf_{\|\nabla u\|=\|\nabla \phi_\om\|} I[u]\leq I[\phi_\om]=0
 	$$
 	Note that,  so far,  we have not even ruled out the scenario $I_{\min}=-\infty$!  We however claim that $I_{\min}=0$, which means that $u=\phi_\om$ is a solution to \eqref{18}.
 	
 	Assume, for a contradiction, that this is not the case. Then, there exists $\tilde{\phi}\neq 0$, so that $\|\nabla \tilde{\phi}\|=\|\nabla \phi_\om\|$, but $I[\tilde{\phi}]<0$. Consider then the continuous function $h(\al):=I[\al \tilde{\phi}]:[0,1]\to \rone$. Since\footnote{In the case $\mu=0$, the sum runs from $l=2$, while in the case $\mu=1$, from $l=3$}
 	$$
 h(\al)=\al^2\left[\om \|\tilde{\phi}\|^2-\f{1}{4\pi}  \sum_{l=2\ \ \textup{or} \ \ l=3}^\infty \al^{2 l-2} \int_{\rtwo} \f{(4\pi \tilde{\phi}^2)^l}{l!} \right],
 	$$
 	it is clear that $h(\al)>0$, for all $0<\al<<1$. Since $h(1)=I[\tilde{\phi}]<0$, it follows by continuity that for some $\tilde{\al}\in (0,1)$, we have that $I(\tilde{\al}\tilde{\phi})= h(\tilde{\al})=0$. Thus, $\tilde{u}:=\tilde{\al}\tilde{\phi}$ satisfies the constraints in \eqref{17}. But then, we reach a contradiction, as
 	$$
 	\|\nabla \phi_\om\|\leq \|\nabla \tilde{u}\|=\tilde{\al} \|\nabla \tilde{\phi}\|=\tilde{\al}
 		\|\nabla \phi_\om\|.
 	$$
 \end{proof}
 Our next task is to establish the spectral properties of $\cl_\pm$, based on the fact that $\phi_\om$ is a constrained minimizer of \eqref{18}.
 \subsection{The Morse index of  $\cl_+$ is exactly one}
We consider  a variation of the function $\phi_\om$ in \eqref{18}, which has a built in property $\|\nabla u\|=\|\nabla \phi_\om\|$. More specifically, for a test function $h$, consider
$$
u_\eps=\|\nabla \phi_\om\| \f{\phi_\om +\eps h}{\|\nabla(\phi_\om +\eps h)\|},
$$
 which satisfies the constraints of \eqref{18}. The function $m(\eps):=I[u_\eps]$ then has a minimum at $\eps=0$, with $m(0)=0$.  The necessary condition $m'(0)=0$ yields the Euler-Lagrange equation for this problem, which is, as expected, nothing but \eqref{20}. Note that this follows, as we take into account the relation  \eqref{45}. Next, the necessary condition $m''(0)\geq 0$ can be written explicitly as well. However, this gets a bit technical, so we reduce our considerations to the case $h\perp \De \phi_\om$, which is enough for our purposes, while simplifying the expressions. Indeed, we have
 $$
 \|\nabla(\phi_\om +\eps h)\|^2=\|\nabla \phi_\om\|^2- 2\eps \dpr{\De \phi_\om}{h}+\eps^2 \|\nabla h\|^2= \|\nabla \phi_\om\|^2 +\eps^2 \|\nabla h\|^2
 $$
 Also, note that since already we have ensured the validity of $m(0)=m'(0)=0$, it follows that $m(\eps)=const. \eps^2+o(\eps^2)$. Thus, we can ignore all powers of $\eps$ in the expansion of $m(\eps)$. To this end,
 \begin{eqnarray*}
& &  m(\eps) =  I[u_\eps]=\om \|u_\eps\|_{L^2}^2 - \int_{\rtwo} G(u_\eps^2)= \\
 &=& \om \f{\|\phi_\om\|^2+2 \eps \dpr{\phi_\om}{h}+\eps^2 \|h\|^2}{1+\f{\eps^2}{\|\nabla\phi_\om\|^2} \|\nabla h\|^2} - \int_{\rtwo} G\left(\phi_\om^2+2 \eps \phi_\om h+ \eps^2\left(h^2 - \f{\phi_\om^2}{\|\nabla \phi_\om\|^2} \|\nabla h\|^2\right)\right)=\\
 &=& \eps^2\left(\om \|h\|^2 - \om \f{\|\phi_\om\|^2}{\|\nabla \phi_\om\|^2} \|\nabla h\|^2 - \int_{\rtwo}
 G'(\phi_\om^2) \left(h^2 - \f{\phi_\om^2}{\|\nabla \phi_\om\|^2} \|\nabla h\|^2\right) - 2 \int_{\rtwo}G''(\phi_\om^2) \phi_\om^2 h^2\right)+O(\eps^3)\\
 &=& \eps^2 \left(\|\nabla h\|^2+\om \|h\|^2 - \dpr{2\phi^2 g'(\phi^2) + g(\phi^2)}{h} \right)+O(\eps^3)   \\
 &=& \eps^2 \dpr{\cl_+ h}{h}+O(\eps^3),
 \end{eqnarray*}
 where we took into account \eqref{45}. Clearly then, $\dpr{\cl_+ h}{h}=\f{m''(0)}{2}\geq 0$, for all $h\perp \De \phi_\om$. In particular, $n(\cl_+)\leq 1$. On the other hand,
 $$
 \dpr{\cl_+ \phi_\om}{\phi_\om}=\|\nabla \phi_\om\|^2+\om \|\phi_\om\|^2 - \int f(\phi_\om^2) - 8\pi \int (e^{4\pi \phi^2}-\mu) \phi_\om^4 dx = - 8\pi \int (e^{4\pi \phi_\om^2}-\mu) \phi_\om^4 dx<0,
 $$
 which shows by the min-max characterization of the eigenvalues,  that there is a negative eigenvalue, whence $n(\cl_+)=1$.
 \subsection{ $\cl_-\geq 0$ with a simple eigenvalue at zero}

  As $\cl_-$ is a Schr\"odinger operator with a radial kernel, its action,  can be decomposed, in a standard way, on the spaces of spherical harmonics. Indeed, $\De$ acts invariantly on each separate spherical harmonic space ${\mathcal X}_l, l=0, \ldots$ (composed of the eigenfunctions for a fixed eigenvalue $\la_l=-l^2$ for $\De_{\sone}$). So, denoting
 $$
 \cl_{-,l}:=\cl_+|_{{\mathcal X_l}}=-\p_{rr}-\f{1}{r} \p_r + \f{l^2}{r^2} -  g(\phi^2),
 $$
  we obtain $\cl_-=\oplus_{l=0}^\infty  \cl_{-,l}$, which can be thought of as acting on $L^2(r dr)$. Note that
 $$
 \cl_{-,0}<\cl_{-,1}<\ldots.
 $$
 Note that $\cl_-[\phi_\om]=0$, which implies that $\cl_{-,0}[\phi_\om]=0$. Since $\phi_\om>0$, this must be the ground state and the lowest eigenvalue for $\cl_{-,0}$ is zero. It follows that $\cl_-\geq \cl_{-,0}\geq 0$, with a simple eigenvalue at zero, spanned by $\phi_\om$.
 \section {Spectral  instability of the Ruf-Sani solitons}
  We start with an introduction to the instability index count, as developed in \cite{KKS, KKS2, KP}.
  \subsection{The Krein instability index theory}
  Consider the eigenvalue problem in the form
  \begin{equation}
  \label{551}
  J L f=\la f.
  \end{equation}
  Introduce the generalized eigenspace
  $$
  E_0=gKer[J L]:=span[\cup_{k=1}^\infty Ker (J L)^k],
  $$
  which turns out to be finite dimensional, under generic assumptions\footnote{certainly verified in our case}. Let $E_0=Ker[L]\oplus \tilde{E}_0$, and select a basis for it, say
  $\tilde{E}_0=span[\psi_1, \ldots, \psi_N]$. Next, consider the  symmetric matrix  $\cd=(\dpr{L \psi_i}{\psi_j})_{i,j\in (1,N)}$. Under these general assumptions,  it is proved in  \cite{KKS} (see Theorem 1), that
  \begin{equation}
  \label{se:10}
  k_r+2 k_c+ 2k_0^{\leq 0}= n(\cl)-n(\cd),
  \end{equation}
  where $k_r$ is the number of real and positive solutions $\la$ in \eqref{551}, which account for the real unstable modes, $2 k_c$ is the number of solutions $\la:\Re \la>0, \Im \la\neq 0$ in \eqref{551}, which account for the modulational instabilities, and finally $2k_0^{\leq 0}$ is the number of the dimension of the marginally  stable directions, corresponding to purely imaginary eigenvalue with negative Krein index.

  In our situation, namely the eigenvalue problem \eqref{55}, we have that $Ker(\cl)=Ker(\cl_+)\oplus Ker(\cl_-)=span[\nabla \phi_\om, \phi_\om]$. The generalized eigenvectors behind $\p_1\phi_\om, \p_2 \phi_\om$ are
  $$
  \psi_j=\left(\begin{array}{c}
  0 \\ \cl_-^{-1}(\p_j \phi_\om)
  \end{array} \right), j=1,2,
  $$
while $\psi_3=\left(\begin{array}{c}
  \cl_+^{-1}(\phi_\om) \\ 0
  \end{array} \right)$, if $\phi_\om \perp Ker(\cl_+)$. Note that such a property, which is usually referred to weak non-degeneracy of $\phi_\om$ while likely true, is not established here.

  Consequently, the matrix $\cd$ split into two symmetric sub-matrices -    $(\dpr{\cl_-^{-1}(\p_i \phi_\om)}{\p_j \phi_\om})_{i,j\in \{1,2\}}$ and $\dpr{\cl\psi_3}{\psi_3}=\dpr{\cl_+^{-1} \phi_\om}{\phi_\om}$, provided $\phi_\om$ is weakly non-degenerate.  Since $\cl_-\geq 0$, it is clear that $n(\cd)=
  n(\dpr{\cl_+^{-1} \phi_\om}{\phi_\om})$. In addition, as we have established $n(\cl)=n(\cl_-)+n(\cl_+)=1$. Thus, the formula \eqref{se:10} (note that it must be that $k_c=k_0^{\leq 0}=0$), together with the other considerations laid out in this section,  imply the following result.
  \begin{corollary}
  	\label{cor:28}
  	Assume that $\phi_\om$ is weakly non-degenerate and $n(\cl_+)=1$.  For the spectral problem \eqref{55}, spectral stability is equivalent to $\dpr{\cl_+^{-1} \phi_\om}{\phi_\om}<0$. Equivalently, all waves with \\ $\dpr{\cl_+^{-1} \phi_\om}{\phi_\om}>0$ are unstable. Moreover, in cases of instability, it  manifests itself through a single real positive eigenvalue in $\si(J L)$.
  \end{corollary}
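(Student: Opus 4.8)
The plan is to read the conclusion straight off the index identity \eqref{se:10}, feeding in the two ingredients already assembled above, namely $n(\cl)=1$ and the reduction $n(\cd)=n(\dpr{\cl_+^{-1}\phi_\om}{\phi_\om})$. The only real step is an elementary parity argument that turns the arithmetic of \eqref{se:10} into the stated dichotomy.

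First I would note that $\dpr{\cl_+^{-1}\phi_\om}{\phi_\om}$ is a single real number, the lone $1\times 1$ block of $\cd$, so its Morse index is $1$ when this number is negative and $0$ when it is positive. The accompanying $2\times 2$ block $(\dpr{\cl_-^{-1}\p_i\phi_\om}{\p_j\phi_\om})_{i,j\in\{1,2\}}$ is positive semidefinite: since $\cl_-\geq 0$ and $\p_j\phi_\om\perp\phi_\om$ by parity, $\cl_-^{-1}$ acts as a nonnegative operator on the relevant subspace, so this block contributes nothing to $n(\cd)$. Inserting $n(\cl)=1$, the identity \eqref{se:10} collapses to
\begin{equation*}
k_r+2k_c+2k_0^{\leq 0}=1-n(\dpr{\cl_+^{-1}\phi_\om}{\phi_\om}).
\end{equation*}

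The case analysis is then immediate. If $\dpr{\cl_+^{-1}\phi_\om}{\phi_\om}<0$, the right-hand side is $0$; since each of $k_r,k_c,k_0^{\leq 0}$ is a nonnegative integer, all three vanish, in particular $k_r=0$, and there is no eigenvalue of $JL$ with $\Re\la>0$, i.e. the wave is spectrally stable. If $\dpr{\cl_+^{-1}\phi_\om}{\phi_\om}>0$, the right-hand side equals $1$; here parity is decisive, because $2k_c+2k_0^{\leq 0}$ is even while the total is the odd number $1$, forcing $k_c=k_0^{\leq 0}=0$ and $k_r=1$. Hence instability occurs, and it is realized by exactly one real positive eigenvalue of $JL$. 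Combining the two cases yields the asserted equivalence: spectral stability holds precisely when $\dpr{\cl_+^{-1}\phi_\om}{\phi_\om}<0$, and every wave with $\dpr{\cl_+^{-1}\phi_\om}{\phi_\om}>0$ is unstable through a single real growing mode.

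I do not anticipate a genuine obstacle at this stage: the substantive analytic input — the computation $n(\cl_+)=1$ from Proposition \ref{theo:10}, the non-negativity of $\cl_-$, and the validity of the counting formula \eqref{se:10} — has all been established earlier. The single point demanding care is the weak non-degeneracy hypothesis $\phi_\om\perp Ker(\cl_+)$, which is exactly what makes $\cl_+^{-1}\phi_\om$ meaningful and $\psi_3$ a legitimate generalized eigenvector, thereby fixing the three-dimensional structure of the generalized kernel $E_0$ on which the block decomposition of $\cd$ rests. I would simply flag that without it the dimension count for $E_0$, and the resulting split of $\cd$, could break down, which is why it is carried as a standing assumption; the borderline case $\dpr{\cl_+^{-1}\phi_\om}{\phi_\om}=0$ is likewise excluded as non-generic.
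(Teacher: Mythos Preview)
Your proposal is correct and follows essentially the same approach as the paper: both read the conclusion off the index identity \eqref{se:10} after inserting $n(\cl)=1$ and $n(\cd)=n(\dpr{\cl_+^{-1}\phi_\om}{\phi_\om})$, the latter coming from $\cl_-\geq 0$. Your version is in fact more explicit than the paper's, which simply asserts ``note that it must be that $k_c=k_0^{\leq 0}=0$'' without spelling out the parity argument you give.
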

  Interestingly, one may provide a necessary and sufficient condition, even when $\phi_\om$ is not necessarily weakly non-degenerate. Indeed, this is done in Theorem 4.1, \cite{Pel}. The result is that the spectral problem \eqref{55} is spectrally stable if and only if
  \begin{equation}
  \label{17}
  \cl_+|_{\{\phi_\om\}^\perp}\geq 0.
  \end{equation}
 This justifies the following sufficient condition for spectral instability.
 \begin{cor}
 	\label{cor:30} If $n(\cl_+)=1$ and there exists $\Psi\perp \phi_\om$, so that $\dpr{\cl_+\Psi}{\Psi}<0$, then the wave $\phi_\om$ is spectrally unstable, with exactly one unstable  real mode.
 \end{cor}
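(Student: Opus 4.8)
The plan is to read this corollary off directly from the two ingredients already assembled in this section: the sharp stability criterion of Pelinovsky \cite{Pel} and the Morse-index bookkeeping of Proposition \ref{theo:10}. By \cite[Theorem 4.1]{Pel} (the displayed condition $\cl_+|_{\{\phi_\om\}^\perp}\geq 0$), the spectral problem \eqref{55} is spectrally stable \emph{precisely} when that restricted form is non-negative, and crucially this equivalence requires no non-degeneracy hypothesis on $\phi_\om$. The standing assumption of the corollary furnishes a function $\Psi\perp\phi_\om$ with $\dpr{\cl_+\Psi}{\Psi}<0$, so the restricted form fails to be non-negative; hence the stability criterion is violated and $\phi_\om$ is spectrally unstable. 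This settles the first assertion.

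For the sharper statement that the instability is realized through a single real mode, I would invoke the robust one-sided bound from the Hamiltonian--Krein index theory of \cite{KKS, KKS2, KP}: the number of eigenvalues $\la$ of $\cj\cl$ with $\Re\la>0$, counted with algebraic multiplicity, never exceeds $n(\cl)$. In our setting $\cl_-\geq 0$ forces $n(\cl_-)=0$, while $n(\cl_+)=1$ by hypothesis (and is in fact established for the Ruf--Sani solitons in Proposition \ref{theo:10}), so that $n(\cl)=n(\cl_+)+n(\cl_-)=1$. Consequently $\cj\cl$ possesses at most one eigenvalue in the open right half-plane.

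I would then combine this count with the Hamiltonian symmetry of the spectrum. Since $\cj\cl$ has the product structure of a skew-adjoint operator times a self-adjoint operator, its spectrum is invariant under both $\la\mapsto-\la$ and $\la\mapsto\bar\la$. A genuinely complex unstable eigenvalue $\la$ with $\Im\la\neq 0$ would therefore be accompanied by its conjugate $\bar\la$, producing \emph{two} eigenvalues with $\Re\la>0$ and contradicting the bound $n(\cl)=1$. Hence the unique unstable eigenvalue must be real and simple; in the notation of \eqref{se:10} this reads $k_c=0$ and $k_r=1$. Together with the instability established in the first step, this yields spectral instability with exactly one unstable real mode, matching the conclusion of Corollary \ref{cor:28} but now under the weaker hypotheses of the present statement.

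The only genuinely delicate point is the appeal to the index theory in the absence of the weak non-degeneracy of $\phi_\om$: the full identity \eqref{se:10} presupposes the generalized-kernel structure (in particular the existence of the vector $\psi_3$), which needs $\phi_\om\perp Ker(\cl_+)$. This is why I would rely on the hypothesis-free upper bound ``number of unstable modes $\le n(\cl)$'' rather than on the exact equality, and lean on Pelinovsky's criterion — which is likewise free of non-degeneracy assumptions — to certify that the unstable spectrum is nonempty. Making the one-sided bound and the spectral symmetry precise in this degenerate-kernel setting is the crux; everything else is immediate from the preceding results.
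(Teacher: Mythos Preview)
Your argument is correct and follows essentially the same route as the paper: invoke Pelinovsky's criterion \cite[Theorem~4.1]{Pel} (which needs no weak non-degeneracy) to conclude instability from the existence of $\Psi\perp\phi_\om$ with $\dpr{\cl_+\Psi}{\Psi}<0$, and then use $n(\cl)=1$ together with the Hamiltonian spectral symmetry to pin down a single real unstable mode. If anything, you are more explicit than the paper about why the unique unstable eigenvalue must be real, carefully separating the hypothesis-free upper bound from the full identity \eqref{se:10}.
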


  \subsection{Proof of  the spectral instability of the waves}
  Per the results of Corollary \ref{cor:30}, it suffices to construct $\Psi\perp \phi_\om$, so that $\dpr{\cl_+\Psi}{\Psi}<0$. To this end, set
  $$
  \Psi:=x\cdot \nabla \phi_\om + \phi_\om.
  $$
  A direct calculation shows $\Psi\perp \phi_\om$. Indeed,
   $$
   \dpr{\Psi}{\phi_\om}=\dpr{x\cdot \nabla \phi_\om}{\phi_\om}+\|\phi_\om\|^2=
    \sum_{j=1}^2 \f{1}{2} \int_{\rtwo} x_j \p_j \phi_\om^2 +\|\phi_\om\|^2=0.
   $$
  It remains to calculate $\dpr{\cl_+\Psi}{\Psi}$. Since $-\De(x\cdot \nabla f)=-x\cdot \nabla \De f - 2 \De f$ and using the profile equation \eqref{20}, we compute
  $$
  \cl_+(x\cdot \nabla \phi_\om)=-2\De \phi_\om.
  $$
  So,
  \begin{eqnarray*}
  \dpr{\cl_+\Psi}{\Psi} &=& \dpr{\cl_+ (x\cdot \nabla \phi_\om)}{x\cdot \nabla \phi_\om+\phi_\om}+\dpr{\phi_\om}{\cl_+ (x\cdot \nabla \phi_\om)}+\dpr{\cl_+\phi}{\phi} = \\
  &=& -2 \dpr{\De \phi_\om}{x\cdot \nabla \phi_\om}+4 \|\nabla \phi_\om\|^2 - 2\int_{\rtwo} g'(\phi_\om^2) \phi_\om^4 dx=  4 \|\nabla \phi_\om\|^2 - 2\int_{\rtwo} g'(\phi_\om^2) \phi_\om^4 dx\\
  &=& \int_{\rtwo} \left(4 f(\phi_\om) \phi_\om - 8 F(\phi_\om)  -2  g'(\phi_\om^2) \phi_\om^4\right)dx =\\
  &=& -  \int_{\rtwo}  \left[e^{4\pi \phi^2}\left(8\pi \phi_\om^4+\f{1}{\pi}-4\phi_\om^2  \right)-\f{1}{\pi} \right]dx.
  \end{eqnarray*}
  where we have used that $\dpr{\De \phi_\om}{x\cdot \nabla \phi_\om}=0$, $\cl_+ \phi_\om=-2\phi_\om^3 g'(\phi^2)$ and the Pohozaev identities \eqref{42} and
  \eqref{45}. We will show momentarily  that the integrand function,
  \begin{equation}
  \label{187}
  e^{4\pi z^2}\left(8\pi z^4+\f{1}{\pi}-4 z^2  \right)-\f{1}{\pi}\geq 0
   \end{equation}
 whence
  $$
  \dpr{\cl_+\Psi}{\Psi} <0.
  $$
According to Corollary \ref{cor:30}, this  implies the spectral instability of $\phi_\om$.

 It remains to prove \eqref{187}. Indeed, it suffices to show that $\chi(x):=8\pi x^2 +\f{1}{\pi} -
4 x- \f{e^{-4\pi x}}{\pi}\geq 0$ for all $x\geq 0$. However, note that $f(0)=f'(0)=0$, while $f''(x)=16\pi(1-e^{-4\pi x})\geq 0$, whence $f(x)>0$ for all $x>0$.

\section{Instability by blow up}
Before tackling the strong instability of the standing waves, we need to make sure that the following Cauchy problem:
\begin{equation}
\label{3.1}
\left\{
\begin{array}{l}
i u_t+\triangle u+f_\mu (u)=0,\ (t,x)\in \mathbb{R}\times\mathbb{R}^{2} \\
\ \\
u(t,0)=u_0(x)
\end{array}
\right.
\end{equation}
where $u_0$ is an initial data in $H^{1}(\mathbb{R}^{2})$, has a unique solution for a time $T>0$.\\
$(\ref{3.1})$ has been resolved in \cite{COLL}. More precisely, the authors have proved the following result.

\begin{lem}[Theorem 1.10, \cite{COLL}]\ \\
	Let $u_0 \in H^{1}(\mathbb{R}^{2})$ such that $\Vert \nabla u_0\Vert_{0}<1$, then there exists a time $T>0$ and a unique solution to the Cauchy problem $(\ref{3.1})$ in the space $C_{T}(H^{1}(\mathbb{R}^{2}))$ with initial data $u_0$. Moreover $u\in L^{4}_{T}(C^{\frac{1}{2}}(\mathbb{R}^{2}))$, where $C^{\alpha}$ is the space of $\alpha$-H\"older continuous functions endowed with the norm $$\Vert u\Vert_{C^{\alpha}}=\Vert u\Vert_{\infty}+\sup_{x\neq y}\frac{\vert u(x)-u(y)\vert}{\vert x-y\vert^{\alpha}}.$$
	Additionally, if $T^*$ denote the maximal time, i.e,
	$
	T^{*}=\sup \left\lbrace T>0,\ (\ref{3.1})\ \text{has a solution on}\ [0,T]\right\rbrace,
	$
	we say that a solution blows up at$T^*$,  if  $\lim\limits_{t\nearrow T^{*}}\Vert \nabla u(t,.)\Vert^{2}_{2}=1.$
	We also have the mass and energy conservation  for the solutions of \eqref{3.1}, which take the form
	\begin{equation}
	\label{3.2}
	 M(u(t,.))=M(u_0), E_\mu(u(t,.))=E(u_0)
	 \end{equation}
\end{lem}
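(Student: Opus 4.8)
The plan is to prove this as a standard Strichartz-based contraction argument (following \cite{COLL}), in which the only genuinely two-dimensional difficulty is that the exponential nonlinearity $f_\mu$ cannot be estimated by Sobolev embedding and Hölder's inequality as in the polynomial case. Instead one must combine the Moser--Trudinger inequality with a sharp logarithmic interpolation bound, and it is precisely here that the strict subcriticality $\|\nabla u_0\|_{L^2}<1$ enters decisively. First I would recast \eqref{3.1} in Duhamel form,
$$
u(t)=e^{it\De}u_0+i\int_0^t e^{i(t-s)\De} f_\mu(u(s))\,ds=:\Phi(u)(t),
$$
and seek a fixed point of $\Phi$ in a closed ball of the resolution space $X_T:=C_T(H^1(\rtwo))\cap L^4_T(C^{1/2}(\rtwo))$.

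The linear part is handled by Strichartz estimates for the two-dimensional Schr\"odinger group. The admissible pair $(4,4)$ gives $\|e^{it\De}u_0\|_{L^4_TL^4_x}\lesssim\|u_0\|_{L^2}$, and applying the same estimate to $\nabla u_0$ upgrades this to $\|e^{it\De}u_0\|_{L^4_T W^{1,4}}\lesssim\|u_0\|_{H^1}$, whence $e^{it\De}u_0\in L^4_T(C^{1/2})$ by the Sobolev embedding $W^{1,4}(\rtwo)\hookrightarrow C^{1/2}(\rtwo)$. The role of the $L^4_T C^{1/2}$ component is exactly to provide pointwise (indeed Hölder) control of $u$, without which the exponential factor in $f_\mu$ cannot even be placed in a dual Strichartz space.

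The heart of the matter is the nonlinear estimate, which I expect to be the main obstacle. To bound $\|f_\mu(u)\|$ in the dual Strichartz norm I would isolate the factor $e^{4\pi|u|^2}$ and control it using two ingredients: Ruf's Moser--Trudinger inequality, $\int_{\rtwo}(e^{4\pi v^2}-1)\,dx\le C$ whenever $\|v\|_{H^1}\le1$, and the sharp logarithmic bound of Ibrahim--Majdoub--Masmoudi, which estimates $\|u(t)\|_\infty^2$ by $\tfrac{1}{4\pi}\|\nabla u(t)\|_{L^2}^2$ times a logarithm of a higher Sobolev norm of $u(t)$. The mechanism is the following: the log estimate dominates $e^{4\pi\|u(t)\|_\infty^2}$ by a power equal to $\|\nabla u(t)\|_{L^2}^2$ of that higher norm, and because $\|\nabla u_0\|_{L^2}<1$ is strict, for short time $\|\nabla u(t)\|_{L^2}$ stays bounded away from the critical threshold $1$, so this is a sub-unit power that can be absorbed into the $X_T$ ball. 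Combined with the $L^4_t$ integrability of $\|u(t)\|_\infty$ coming from the $C^{1/2}$ component, one obtains $\|\Phi(u)\|_{X_T}\le C\|u_0\|_{H^1}+C(R)\,T^\theta$ on the ball of radius $R$ for some $\theta>0$, together with a matching Lipschitz estimate for $\Phi(u)-\Phi(w)$. Taking $T$ small then makes $\Phi$ a contraction, producing a unique local solution in $X_T$; uniqueness in $C_T H^1$ follows from a further application of the same difference estimate.

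Finally I would record the conservation laws and the continuation criterion. Mass and energy conservation \eqref{3.2} follow by pairing \eqref{3.1} with $\bar u$ and with $u_t$ and integrating in space, the manipulations being legitimate thanks to the $X_T$ regularity (or, if preferred, by approximating $u_0$ with smooth data and passing to the limit). For the blow-up alternative, the local existence time constructed above depends only on $\|u_0\|_{L^2}$, which is conserved, and on an upper bound for $\|\nabla u_0\|_{L^2}$ that is strictly below $1$. Hence, as long as $\limsup_{t\nearrow T^*}\|\nabla u(t)\|_{L^2}<1$, the solution can be continued beyond $T^*$ by uniform time steps, contradicting the maximality of $T^*$. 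Therefore $T^*<\infty$ forces $\|\nabla u(t)\|_{L^2}^2\to1$ as $t\nearrow T^*$, which is exactly the stated blow-up criterion.
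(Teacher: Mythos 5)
This lemma is not proved in the paper at all: it is quoted verbatim, with citation, from Theorem 1.10 of \cite{COLL}, so there is no internal proof to compare against. Your sketch reconstructs the actual argument of that reference --- Duhamel formula plus two-dimensional Strichartz for the admissible pair $(4,4)$, the Morrey embedding $W^{1,4}(\mathbb{R}^{2})\hookrightarrow C^{1/2}(\mathbb{R}^{2})$, Ruf's Moser--Trudinger inequality, and the Ibrahim--Majdoub--Masmoudi logarithmic inequality, whose sub-unit exponent is precisely where the strict hypothesis $\Vert \nabla u_0\Vert_{L^2}<1$ enters --- and the strategy is sound, including your observation that the contraction time depends only on $\Vert u_0\Vert_{H^1}$ and the gap $1-\Vert\nabla u_0\Vert_{L^2}$, which is what makes the uniform-time-step continuation work despite the energy-critical growth. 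One inaccuracy to fix: your continuation argument only yields $\limsup_{t\nearrow T^{*}}\Vert \nabla u(t,\cdot)\Vert_{2}^{2}=1$, not a genuine limit, since $\Vert\nabla u(t)\Vert_{2}$ could a priori oscillate near $T^{*}$; in both the lemma as stated and in \cite{COLL} the displayed equality with $\lim$ serves as the \emph{definition} of blow-up at $T^{*}$ rather than a proved dichotomy, so you should either weaken your final conclusion to a limsup or note explicitly that you are adopting a definition, not deriving it.
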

 For $v\in H^{1}(\mathbb{R}^{2})$, we define the action functional in the following way:
 $$
 S_{\mu}(v)=E_{\mu}(v)+\frac{1}{2}M(v)=\frac{1}{2}\Vert \nabla v\Vert_{2}^{2}+
 \frac{1}{2}\Vert v \Vert_{2}^{2}-\int F_{\mu}(v)dx.$$
 Note that a ground state as defined by $(1.3)$ minimizes the action functional in the following way:
  $$
  S_{\mu}(\Phi)=\inf\left\lbrace S_{\mu}(v):\ v\in H^{1}(\rtwo)\setminus\lbrace0\rbrace,\ P_{\mu}(v)=0\right\rbrace.
  $$
	The following two quantities also play a crucial role in the study of strong instability:
	$$
	P_{\mu}(v)=\frac{1}{2}\Vert v\Vert^{2}_{2}-\int F_{\mu}(v)dx.$$
	and
	$$
	I_{\mu}(v)=\frac{1}{2}\Vert \nabla v \Vert^{2}_{2}-\int v f_{\mu}(v)dx=2E_{\mu}(v)-\int v  f_{\mu}(v)-4F_{\mu}(v)dx.
	$$
Note that $S_{\mu}(\Phi)$ can also be rewritten in the following way :
$$
S_{\mu}(\Phi)=\inf\left\lbrace S_{\mu}(v):\ v \in H^{1}(\rtwo)\setminus\lbrace0\rbrace,\ I_{\mu}(v)=0\right\rbrace.
$$
	 A virial property has also been shown in \cite{DKM}, More precisely, the authors showed that if $u$ is a solution to $(\ref{3.1})$, then
	 $$\frac{d^{2}}{dt^{2}}\Vert xu\Vert^{2}_{2}=8I_{\mu}(u),\ \ \forall\ t\in[0,T^{*}).$$
	 Lastly, we introduce two classical sets that are important in the study of orbital stability as they have the desired invariance with respect to the flow of $(\ref{3.1})$:
	 $$
	 K^{-}_\mu=\left\lbrace v\in H^{1}(\rtwo)\backslash\lbrace 0\rbrace :\ S_{\mu}(v)<S_{\mu}(\Phi),\ I_{\mu}(v)<0\right\rbrace
	 $$
	 and
	 $$
	 K^{+}_\mu=\left\lbrace v \in H^{1}(\rtwo)\backslash\lbrace 0\rbrace :\ S_{\mu}(v)<S_{\mu}(\Phi),\ I_{\mu}(v)>0\right\rbrace
	 $$
	The following lemma provides sufficient conditions for a finite time blow-up.
	\begin{lem}[Lemma 3.9, \cite{DKM}]
		\label{le:4.2}
		Let $\mu\in\lbrace 0,1\rbrace$ and $u_{0}\in H^{1}(\mathbb{R}^{2})$ be such that $\Vert \nabla u_{0}\Vert_{2}^{2}<1$ and $E_{\mu}(u_{0})\geq 0.$ If
		$$
		u_{0}\in K^{-}_{\mu}, \ \ u_{0}\in H^{1}(\mathbb{R}^{2})\cap L^{2}(\vert x\vert^{2}dx),
		$$
		then the corresponding solution to $(\ref{3.1})$ blows up in finite time.
	\end{lem}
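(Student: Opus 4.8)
The statement is a blow-up criterion, and I would prove it by the standard potential-well (Payne--Sattinger / Ohta) scheme, relying on the three ingredients already assembled above: conservation of $M$ and $E_\mu$ (hence of the action $S_\mu$) recorded in \eqref{3.2}, the variational identity $S_\mu(\Phi)=\inf\{S_\mu(v):\ v\in H^1(\rtwo)\setminus\{0\},\ I_\mu(v)=0\}$, and the virial identity $\tfrac{d^2}{dt^2}\|xu\|_2^2=8I_\mu(u)$. Write $d:=S_\mu(\Phi)$ and let $u$ be the maximal $H^1$-solution of \eqref{3.1} on $[0,T^*)$ produced by the local theory of \cite{COLL}; there blow-up means exactly $T^*<\infty$ (equivalently $\|\nabla u(t)\|_2^2\to 1$), and while $t<T^*$ one has $\|\nabla u(t)\|_2<1$. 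Thus the whole task reduces to proving $T^*<\infty$.

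First I would show that the well $K^-_\mu$ is invariant along the flow. Since $E_\mu$ and $M$, and therefore $S_\mu$, are conserved, $S_\mu(u(t))=S_\mu(u_0)<d$ for every $t\in[0,T^*)$, so the first defining inequality of $K^-_\mu$ persists; moreover $u(t)\neq 0$ because $M(u(t))=M(u_0)>0$. The map $t\mapsto I_\mu(u(t))$ is continuous, as $u\in C([0,T^*),H^1)$ and the exponential integrals are continuous on the set $\{\|\nabla\cdot\|_2<1\}$ by Trudinger--Moser. If $I_\mu(u(t_0))=0$ held at some $t_0$, then $u(t_0)$ would be admissible in the variational problem defining $d$, forcing $S_\mu(u(t_0))\geq d$ and contradicting $S_\mu(u(t_0))<d$. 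Hence $I_\mu(u(t))<0$ throughout $[0,T^*)$.

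The crux is to promote this to a time-uniform gap. I would fiber along the mass-preserving dilation $v^\lambda(x):=\lambda v(\lambda x)$ (the dimension being two), for which one computes the scaling identity $\partial_\lambda S_\mu(v^\lambda)=\lambda^{-1}I_\mu(v^\lambda)$. For any $v$ with $I_\mu(v)<0$ one has $I_\mu(v^\lambda)>0$ for all small $\lambda>0$ (the gradient term $\lambda^2\|\nabla v\|_2^2$ dominates the higher-order nonlinear terms), so there is $\lambda^\ast\in(0,1)$ with $I_\mu(v^{\lambda^\ast})=0$; since $v^{\lambda^\ast}\neq 0$ this gives $S_\mu(v^{\lambda^\ast})\geq d$, and feeding this back through the scaling identity yields the Payne--Sattinger inequality $I_\mu(v)\leq 2\bigl(S_\mu(v)-d\bigr)$. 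Taking $v=u(t)$ and using $S_\mu(u(t))\equiv S_\mu(u_0)$ produces the uniform bound $I_\mu(u(t))\leq 2\bigl(S_\mu(u_0)-d\bigr)=:-2\delta<0$, with $\delta:=d-S_\mu(u_0)>0$. I expect this step to be the main obstacle: unlike the power case, $F_\mu$ is not homogeneous, so the argument must be run on the Taylor expansion of $F_\mu$, which contains only monomials $|v|^{2l}$ with $l\geq 2$ if $\mu=0$ and $l\geq 3$ if $\mu=1$ — every such exponent being $L^2$-critical or supercritical in two dimensions, so that $\lambda\mapsto S_\mu(v^\lambda)$ has a strict interior maximum and the inequality is genuine. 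Moreover everything must remain inside the Trudinger--Moser regime $\|\nabla u\|_2<1$, which is precisely what the hypotheses $\|\nabla u_0\|_2<1$ and $E_\mu(u_0)\geq 0$ secure, keeping the exponential integrals finite and the virial identity legitimate along the evolution.

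Finally, since $u_0\in L^2(|x|^2\,dx)$, finite variance propagates and $V(t):=\|xu(t)\|_2^2$ is of class $C^2$; the virial identity together with the gap of the previous step gives $V''(t)=8I_\mu(u(t))\leq-16\delta<0$. Integrating twice, $V(t)\leq V(0)+V'(0)\,t-8\delta\,t^2$, whose right-hand side becomes negative for large $t$, contradicting $V(t)\geq 0$. Therefore $T^*<\infty$, and the solution of \eqref{3.1} blows up in finite time, as claimed.
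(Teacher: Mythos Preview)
Your proposal is correct and follows the classical Payne--Sattinger/Ohta potential-well scheme, which is precisely the argument of \cite{DKM} that the paper is quoting; note that the paper itself does not supply a proof of this lemma but merely cites \cite[Lemma 3.9]{DKM}. The one step worth making explicit is your inequality $I_\mu(v)\le 2(S_\mu(v)-d)$: it follows because $s\mapsto S_\mu(v^{\sqrt{s}})$ is concave (each monomial $|v|^{2l}$, $l\ge 2$, in the Taylor expansion of $F_\mu$ contributes a concave term in $s=\lambda^2$), so the tangent-line inequality at $s=1$ together with $S_\mu(v^{\lambda^\ast})\ge d$ and $0<(\lambda^\ast)^2<1$ gives exactly the stated bound.
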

	 The next result, also proved in \cite{DKM}, shows that appropriate and close rescales of the ground state satisfy the requirements of Lemma \ref{le:4.2} and hence provide the instability by blow up claimed in Theorem \ref{theo:20}.
	 \begin{lem}[Lemma 3.13, \cite{DKM}]
	 	\label{le:4.3}
	 	Let $\mu\in\lbrace 0,1\rbrace$, $\Phi_{\lambda}(x):=\lambda \Phi (\lambda x)$. There exists $\epsilon>0$, so that  for all $\la:  1<\la<1+\epsilon$,  the following   holds true:
	 		\begin{eqnarray}
	 		\label{3.3}
	 		& &  E_{\mu}(\Phi_{\lambda})>0,\ I_{\mu}(\Phi_{\lambda})<0, \ \  S_{\mu}(\Phi_{\lambda})<S_{\mu}(\Phi),\\
	 		\label{3.4}
	 	& & 	\Vert \triangledown\Phi_{\lambda}\Vert^{2}_{2}<1, \ \  \Phi_{\lambda}\in H^{1}(\mathbb{R}^{2})\cap L^{2}(\vert x\vert^{2}dx)
	 		\end{eqnarray}
	 \end{lem}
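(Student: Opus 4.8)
The plan is to reduce everything to the one-parameter scalar function $\la\mapsto S_\mu(\Phi_\la)$ attached to the $L^2$-invariant dilation $\Phi_\la(x)=\la\Phi(\la x)$, and to extract all of \eqref{3.3}--\eqref{3.4} from its first two derivatives at $\la=1$, recycling the computation already performed in Section 3. The two assertions in \eqref{3.4} are immediate: since $\|\Phi_\la\|=\|\Phi\|$ and $\|\nabla\Phi_\la\|^2=\la^2\|\nabla\Phi\|^2$, and $\|\nabla\Phi\|<1$ by Corollary \ref{cor:20}, continuity gives $\|\nabla\Phi_\la\|^2<1$ on a right-neighborhood of $\la=1$; and $\Phi_\la\in H^1\cap L^2(|x|^2dx)$ follows from $\Phi\in H^1$ together with the exponential decay \eqref{58}, which puts $|x|\Phi$ in $L^2$. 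So the content is \eqref{3.3}.

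First I would establish the first-order identity
$$
\frac{d}{d\la}S_\mu(\Phi_\la)=\frac{1}{\la}\,I_\mu(\Phi_\la),
$$
obtained by a direct rescaling computation (differentiation under the integral being legitimate since $\Phi\in L^\infty$ decays exponentially and the Moser-Trudinger bound keeps $\int(e^{4\pi\Phi^2}-1)$ and its $\la$-derivatives finite near $\la=1$). The crucial observation is that the generator of this flow is exactly the test function from the spectral analysis, $\frac{d}{d\la}\big|_{\la=1}\Phi_\la=\Phi+x\cdot\nabla\Phi=\Psi$. Evaluating at $\la=1$ and invoking the Pohozaev and Nehari identities \eqref{42}, \eqref{45} gives $I_\mu(\Phi)=0$, so $\la=1$ is a critical point of $\la\mapsto S_\mu(\Phi_\la)$, as it must be since $\Phi$ is a critical point of $S_\mu$.

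Next I would read off the sign from the second-order term. Because $\Phi$ is a critical point of $S_\mu$ and $\Psi$ is the infinitesimal generator of the dilation, differentiating twice along the curve kills the $S_\mu'$-term and leaves the Hessian quadratic form,
$$
\frac{d^2}{d\la^2}\Big|_{\la=1}S_\mu(\Phi_\la)=\dpr{S_\mu''(\Phi)\Psi}{\Psi}=\dpr{\cl_+\Psi}{\Psi},
$$
since $S_\mu''(\Phi)=-\De+1-f_\mu'(\Phi)=\cl_+$. This quantity was already shown to be strictly negative in Section 3, via the pointwise inequality \eqref{187}. Differentiating $I_\mu(\Phi_\la)=\la\,\frac{d}{d\la}S_\mu(\Phi_\la)$ at $\la=1$ (and using $\frac{d}{d\la}\big|_{\la=1}S_\mu(\Phi_\la)=0$) yields $\frac{d}{d\la}\big|_{\la=1}I_\mu(\Phi_\la)=\dpr{\cl_+\Psi}{\Psi}<0$; together with $I_\mu(\Phi)=0$ this gives $I_\mu(\Phi_\la)<0$ for $\la\in(1,1+\eps)$, which is the second inequality in \eqref{3.3}. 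Feeding this into $\frac{d}{d\la}S_\mu(\Phi_\la)=\la^{-1}I_\mu(\Phi_\la)<0$ shows $S_\mu(\Phi_\la)$ is strictly decreasing on $(1,1+\eps)$, so $S_\mu(\Phi_\la)<S_\mu(\Phi)$, the third inequality.

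The remaining inequality $E_\mu(\Phi_\la)>0$ reduces, by continuity, to $E_\mu(\Phi)>0$, and here a separate elementary estimate is needed. From the relation $I_\mu(v)=2E_\mu(v)-\int\big(vf_\mu(v)-4F_\mu(v)\big)dx$ and $I_\mu(\Phi)=0$ one gets $2E_\mu(\Phi)=\int\big(\Phi f_\mu(\Phi)-4F_\mu(\Phi)\big)dx$; the integrand equals $\Theta(\Phi^2)$ with $\Theta(w)=e^{4\pi w}\big(w-\frac{1}{2\pi}\big)+w+\frac{1}{2\pi}$ (the $\mu$-terms cancel, so $\mu\in\{0,1\}$ are treated at once), and $\Theta(0)=\Theta'(0)=0$, $\Theta''(w)=16\pi^2 w\,e^{4\pi w}\ge0$ force $\Theta>0$ on $(0,\infty)$, whence $E_\mu(\Phi)>0$. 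Choosing $\eps$ as the smallest of the finitely many thresholds above finishes the proof. The only delicate points are justifying the differentiation under the integral for the exponential nonlinearity and the identification $\frac{d^2}{d\la^2}\big|_{\la=1}S_\mu(\Phi_\la)=\dpr{\cl_+\Psi}{\Psi}$ --- the latter being exactly what allows the spectral inequality \eqref{187} to be reused instead of re-proved; the rest is bookkeeping with the Pohozaev/Nehari relations.
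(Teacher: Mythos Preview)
The paper does not give its own proof of this lemma: it is quoted from \cite{DKM} (with the remark that the statement there needs a slight modification), and then used as a black box to conclude Theorem~\ref{theo:20}. So there is no in-paper argument to compare against.

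Your argument is correct and self-contained. The scaling identities for $\|\Phi_\la\|$, $\|\nabla\Phi_\la\|$ and $\int F_\mu(\Phi_\la)$ are right, and with the correct form of the virial functional, $I_\mu(v)=2E_\mu(v)-\int(vf_\mu(v)-4F_\mu(v))\,dx=\|\nabla v\|^2+2\int F_\mu(v)-\int vf_\mu(v)$ (the displayed expression $\frac12\|\nabla v\|^2-\int vf_\mu(v)$ in the paper is a typo; it is inconsistent with $I_\mu(\Phi)=0$), the relation $\frac{d}{d\la}S_\mu(\Phi_\la)=\la^{-1}I_\mu(\Phi_\la)$ holds. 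The identification $\partial_\la|_{\la=1}\Phi_\la=\Phi+x\cdot\nabla\Phi=\Psi$ and hence $\frac{d^2}{d\la^2}\big|_{\la=1}S_\mu(\Phi_\la)=\dpr{\cl_+\Psi}{\Psi}$ is correct, and your $E_\mu(\Phi)>0$ computation via $\Theta(w)=e^{4\pi w}(w-\tfrac{1}{2\pi})+w+\tfrac{1}{2\pi}$ checks out (indeed $\Theta(0)=\Theta'(0)=0$, $\Theta''(w)=16\pi^2 w e^{4\pi w}\ge 0$, and the $\mu$-terms cancel).

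What your approach buys over simply citing \cite{DKM} is a genuine internal link: the strict concavity of $\la\mapsto S_\mu(\Phi_\la)$ at $\la=1$ is \emph{exactly} the spectral inequality $\dpr{\cl_+\Psi}{\Psi}<0$ established in Section~3 via \eqref{187}, so the blow-up and spectral instability parts of Theorem~\ref{theo:20} rest on one and the same computation rather than two independent ones. That is worth pointing out explicitly.
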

	 {\bf Remark:} The precise statement of Lemma 3.13 in \cite{DKM} requires a slight modification, but  the result quoted here holds true due to the arguments presented there.
	
	 We are now ready to complete the proof of the strong instability of the waves $\Phi$. Consider $\Phi_n(x):=\la_n \Phi(\la_n x)$, for a sequence $\la_n\to 1+$. Clearly, $\lim_n \|\Phi_n-\Phi\|_{H^1}=0$. Also, the sequence $\Phi_n$ satisfies all the assumptions in Lemma \ref{le:4.2}, due to Lemma \ref{le:4.3}. Therefore, all solutions with initial data $\Phi_n$ blow up in finite time, whence we conclude the strong instability of the waves $\Phi$.


\end{document}